\DeclareMathOperator{\arccosh}{arccosh}
\DeclareFontFamily{OT1}{cyr}{}
\DeclareFontShape{OT1}{cyr}{m}{n}
   {  <5> <6> <7> <8> <9> gen * wncyr
      <10> <10.95> <12> <14.4> <17.28> <20.74> <24.88> wncyr10}{}
\DeclareFontShape{OT1}{cyr}{m}{it}
    {
       <5> <6> <7> <8> <9> gen * wncyi
      <10> <10.95> <12> <14.4> <17.28> <20.74> <24.88>wncyi10
      }{}
\DeclareFontShape{OT1}{cyr}{m}{ss}
    {
       <5> <6> <7> <8> wncyss8
       <9> wncy9
      <10> <10.95> <12> <14.4> <17.28> <20.74> <24.88>wncyss10
      }{}
\DeclareFontShape{OT1}{cyr}{m}{sc}
    {
       <5> <6> <7> <8> <9> <10> <10.95> <12> <14.4> <17.28> <20.74> <24.88>wncysc10
      }{}
\DeclareFontShape{OT1}{cyr}{bx}{n}
   {
       <5> <6> <7> <8> <9> gen * wncyb
      <10> <10.95> <12> <14.4> <17.28> <20.74> <24.88>wncyb10
      }{}
\DeclareTextFontCommand{\textcyr}{\fontfamily{cyr}\selectfont}
\providecommand{\cyr}{\fontfamily{cyr}\selectfont\def\cprime{\~}}
\providecommand{\cprime}{'}
\newtheorem{thm}{Theorem}
\newtheorem{prop}[thm]{Proposition}
\newtheorem{lem}[thm]{Lemma}
\newtheorem{cor}[thm]{Corollary}
\theoremstyle{definition}
\newtheorem{defn}[thm]{Definition}
\theoremstyle{remark}
\newtheorem{rem}[thm]{Remark}
\newtheorem{example}[thm]{Example}
\theoremstyle{definition}
\newtheorem{proc}[thm]{Procedure}
\providecommand{\Space}[3][]{\ensuremath{\mathbb{#2}^{#3}_{#1}{}}}
\providecommand{\SL}[1][2]{\ensuremath{\mathrm{SL}_{#1}(\Space{R}{})}}
\providecommand{\tr}{\mathop{\mathrm{tr}}}
\providecommand{\scalar}[3][\relax]{\left\langle #2,#3 
        \right\rangle\ifx#1\relax\else_{#1}\fi}
\providecommand{\modulus}[2][\relax]{\left| #2 \right|\ifx#1\relax\else_{#1}\fi}
  \providecommand{\Zbl}[1]{Zbl\href{http://www.emis.de:80/cgi-bin/zmen/ZMATH/en/zmathf.html?first=1&maxdocs=3&type=html&an=#1&format=complete}{#1}}
\providecommand{\rmi}{\mathrm{i}}
\providecommand{\rme}{\mathrm{e}}
\providecommand{\nscalar}[3][\relax]{\left[ #2,#3 
        \right]\ifx#1\relax\else_{#1}\fi}
\begin{document}
\title[Conformal Parametrisation of Loxodromes]
{Conformal Parametrisation of Loxodromes\\ by Triples of Circles}

\author[V.V. Kisil and J. Reid]%
{\href{http://www.maths.leeds.ac.uk/~kisilv/}{Vladimir V. Kisil} and James Reid}

\address{%
School of Mathematics\\
University of Leeds\\
Leeds LS2\,9JT\\
UK
}
\email{\href{mailto:kisilv@maths.leeds.ac.uk}{kisilv@maths.leeds.ac.uk}}
\urladdr{\url{http://www.maths.leeds.ac.uk/~kisilv/}}

\address{School of Mathematics\\ The University of Manchester\\ Manchester M13 9PL\\ UK}

\email{\href{mailto:Jamesjohnreid@googlemail.com}{Jamesjohnreid@googlemail.com}}

\date{\today}
\dedicatory{Dedicated to Prof. Wolfgang Spr\"o\ss{}ig on the occasion
  of his 70\(^{\text{th}}\) birthday}

\begin{abstract}
  We provide a parametrisation of a loxodrome by three specially
  arranged cycles. The parametrisation is covariant under fractional
  linear transformations of the complex plane and naturally encodes
  conformal properties of loxodromes. Selected geometrical examples
  illustrate the usage of parametrisation. Our work extends the set of
  objects in Lie sphere geometry---circle, lines and points---to the
  natural maximal conformally-invariant family, which also includes
  loxodromes.
\end{abstract}

\keywords{loxodrome, fractional linear transformations, logarithmic
  spiral, cycle, Lie geometry, M\"obius map, Fillmore--Springer--Cnops construction}

\subjclass[2010]{Primary 51B10; Secondary 51B25, 30C20, 30C35.}
\maketitle

\section{Introduction}
\label{sec:introduction}

It is easy to come across shapes of logarithmic spirals, as on
Fig.~\ref{fig:log-lox-init}(a), looking either on a sunflower, a snail
shell or a remote galaxy. It is not surprising since the fundamental
differential equation \(\dot{y}=\lambda y\),
\(\lambda\in \Space{C}{}\) serves as a first approximation to many
natural processes.
\begin{figure}[htbp]
  \centering
  (a)\includegraphics[scale=.37]{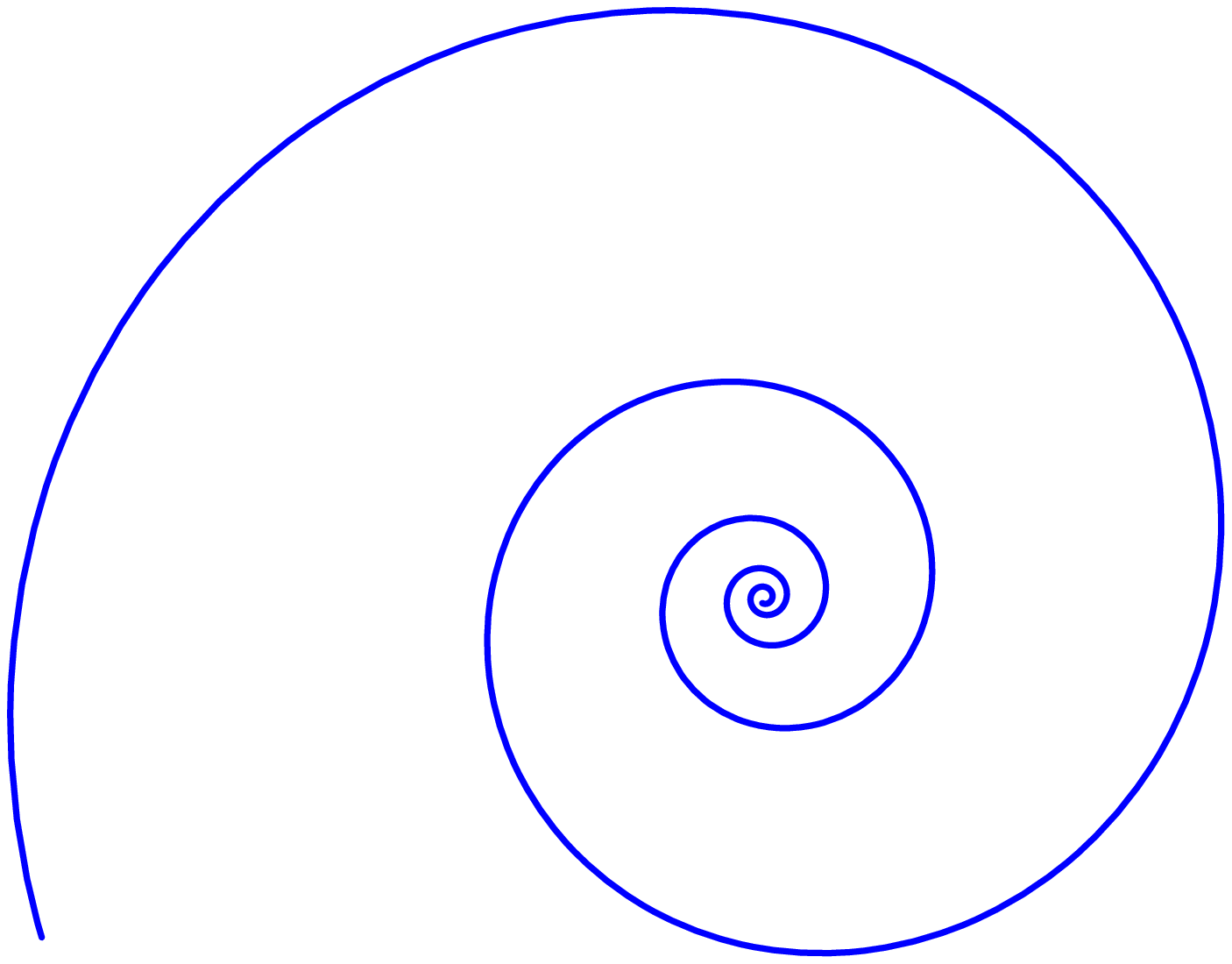}  \hfil
  (b)\includegraphics[scale=.37]{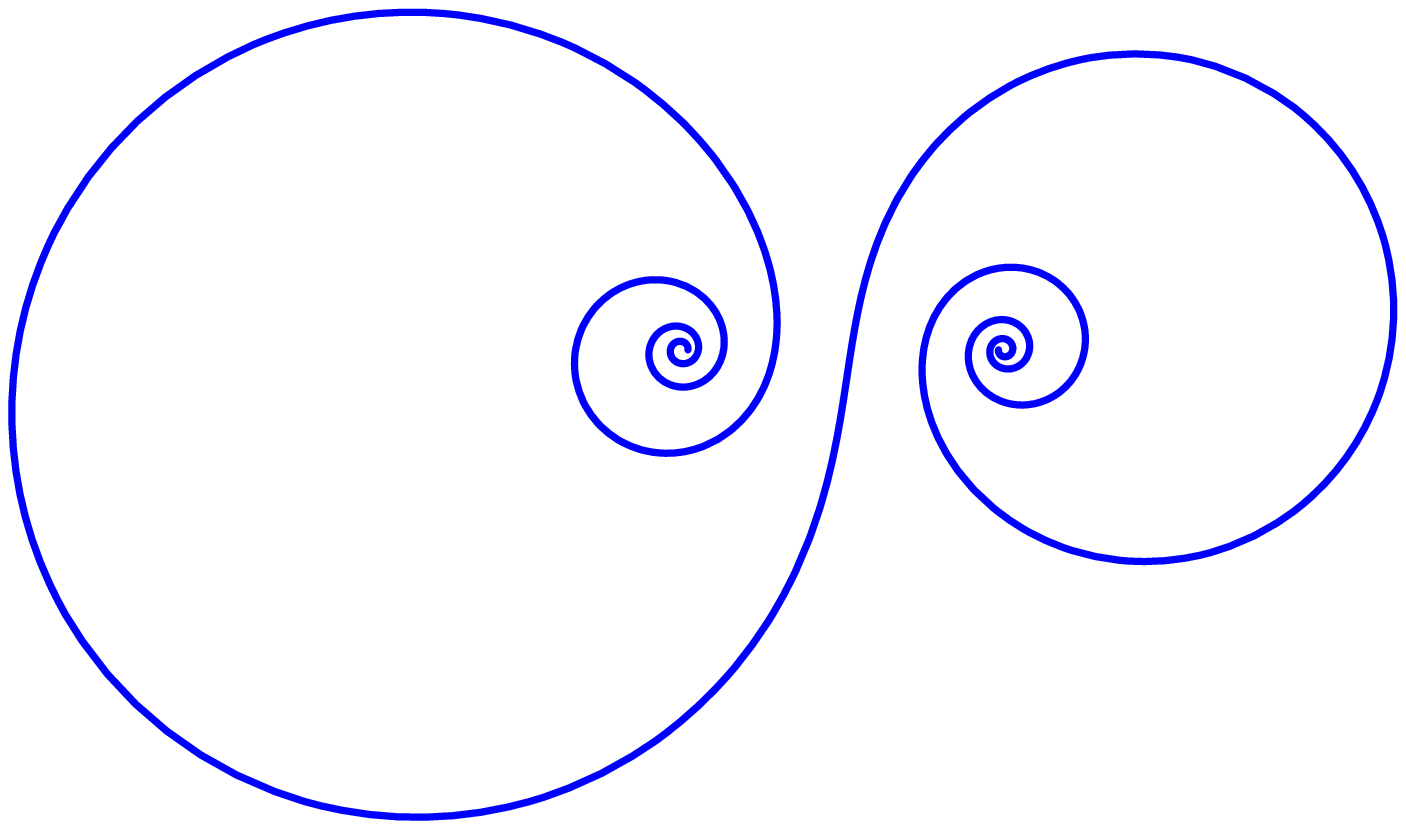}
  \caption[A logarithnic spiral and a loxodrome]{A logarithnic spiral
    (a) and its image under a fractional linear
    transformation---loxodrome (b).}
  \label{fig:log-lox-init}
\end{figure}
The main symmetries of complex analysis are build on the fractional linear
transformation (FLT):
\begin{equation}
  \label{eq:FLT-defn}
  \begin{pmatrix}
    \alpha & \beta \\
    \gamma & \delta 
  \end{pmatrix}:\ 
  z\mapsto \frac{\alpha z+\beta}{\gamma z +\delta}, \quad
    \text{ where }
    \alpha,\beta, \gamma,\delta \in \Space{C}{} \text{ and }
    \det  \begin{pmatrix}
    \alpha & \beta \\
    \gamma & \delta 
  \end{pmatrix}
  \neq 0.
\end{equation}
Thus, images of logarithmic spirals under FLT, called loxodromes, as
on Fig.~\ref{fig:log-lox-init}(b) shall not be rare.  Indeed, they
appear in many occasions from the stereographic projection of a rhumb
line in navigation to a preferred model of a Carleson arc in the
theory singular integral
operators~\cites{BoettcherKarlovich01a,%
  BishopBoettcherKarlovichSpitkovsky99a}. Furthermore,
loxodromes are orbits of one-parameter continuous groups of FLT of
loxodromic type~\citelist{\cite{Beardon95}*{\S~4.3} \cite{Simon11a}*{\S~9.2}
  \cite{Vasilevski08a}*{\S~9.2}}.

This setup motivates a search for effective tools to deal with
FLT-invariant properties of loxodromes. They were studied from
a differential geometry point of view in several recent paper
\cites{Bolt07a,Porter93a,Porter98a,Porter98b,Porter07a}. In this work
we develop a ``global'' description which matches the Lie sphere
geometry framework, see Rem.~\ref{re:historic}.

The outline of the paper is as follows. After preliminaries on FLT and
invariant geometry of cycles (Section~\ref{sec:preliminaries}) we review
the basics of logarithmic spirals and loxodromes
(Section~\ref{sec:fract-line-transf}). A new parametrisation of
loxodromes is introduced in Section~\ref{sec:loxodr-encod-as} and several
examples illustrate its usage in Section~\ref{sec:applications}.
Section~\ref{sec:open-questions} frames
our work within a wider approach~\cites{Kisil15a,Kisil14b},
which extends Lie sphere geometry.
A brief list of open questions concludes the paper.

 \section{Preliminaries: Fractional Linear Transformations  and Cycles}
\label{sec:preliminaries}

In this section we provide some necessary background in Lie geometry
of circles, fractional-linear transformations and
Fillmore--Springer--Cnops construction (FSCc). Regretfully, the latter
remains largely unknown in the context of complex numbers despite of
its numerous advantages. We will have some further discussion of this
in Rem.~\ref{re:historic} below.

The right way~\cite{Simon11a}*{\S~9.2} to think about
FLT~\eqref{eq:FLT-defn} is through the \emph{projective complex line}
\(P\Space{C}{}\). It is the family of cosets in \(\Space{C}{2}\) with
respect to the equivalence relation \(
\begin{pmatrix}
  w_1\\w_2
\end{pmatrix}
\sim
\begin{pmatrix}
  \alpha w_1\\ \alpha w_2
\end{pmatrix}\) for all nonzero \(\alpha\in\Space{C}{}\). Conveniently
\(\Space{C}{}\) is identified with a part of \(P\Space{C}{}\) by
assigning the coset of \(
\begin{pmatrix}
  z\\1
\end{pmatrix}\) to \(z\in\Space{C}{}\). Loosely speaking
\(P\Space{C}{}=\Space{C}{}\cup \{\infty\}\), where \(\infty\) is the
coset of \(
\begin{pmatrix}
  1\\0
\end{pmatrix}\). The pair \([w_1:w_2]\) with
\(w_2\neq 0\) gives \emph{homogeneous coordinates} for
\(z=w_1/w_2 \in\Space{C}{}\). Then, the linear map
\(\Space{C}{2}\rightarrow \Space{C}{2}\)
\begin{equation}
  \label{eq:gl2-linear-map}
  M:\ \begin{pmatrix}
    w_1\\
    w_2
  \end{pmatrix}
  \mapsto
  \begin{pmatrix}
    w_1'\\
    w_2'
  \end{pmatrix}
  =
    \begin{pmatrix}
    \alpha w_1 + \beta w_2\\
    \gamma w_1 + \delta w_2
  \end{pmatrix},
  \text{ where }
    M=\begin{pmatrix}
    \alpha & \beta \\
    \gamma & \delta 
  \end{pmatrix}
  \in\mathrm{GL}_2(\Space{C}{})
\end{equation}
factors from \(\Space{C}{2}\) to \(P\Space{C}{}\) and coincides
with~\eqref{eq:FLT-defn} on \(\Space{C}{}\subset P\Space{C}{}\).

Generic equations of cycle in real and complex coordinates \(z=x+\rmi
y\) are:
\begin{equation}
  \label{eq:cycle-def-real}
  k(x^2+y^2)-2lx-2ny+m=0\qquad \text{or} \qquad
  k z\bar{z}-\bar{L}z-{L}\bar{z} +m=0\,,
\end{equation}
where \((k,l,n,m)\in\Space{R}{4}\) and \(L=l+\rmi n\).  This includes
lines (if \(k=0\)), points as zero-radius circles (if \(l^2+n^2-mk=0\)) and
proper circles otherwise. Homogeneity of~\eqref{eq:cycle-def-real}
suggests that \((k,l,m,n)\) shall be considered as homogeneous
coordinates \([k:l:m:n]\) of a point in three-dimensional projective
space \(P\Space{R}{3}\).

The homogeneous form of cycle's equation~\eqref{eq:cycle-def-real} for
\(z=[w_1:w_2]\) can be written\footnote{Of course, this is not the
  only possible presentation. However, this form is particularly
  suitable to demonstrate FLT-invariance~\eqref{eq:product-invariance}
  of the cycle product below.}  using matrices as follows:
\begin{equation}
  \label{eq:quadratic-matrices}
  k w_1\bar{w}_1-\bar{L}w_1\bar{w}_2-{L}\bar{w}_1 w_2+mw_2\bar{w}_2=
  \begin{pmatrix}
    -\bar{w}_2&\bar{w}_1
  \end{pmatrix}
  \begin{pmatrix}
    \bar{L}&-m\\
    k&-{L}
  \end{pmatrix}
  \begin{pmatrix}
    {w}_1\\w_2
  \end{pmatrix}=0.
\end{equation}
From now on we identify a cycle \(C\) given by~\eqref{eq:cycle-def-real}
with its \(2\times 2\) matrix \(  \begin{pmatrix}
    \bar{L}&-m\\
    k&-{L}
  \end{pmatrix}
  \), this is called the \emph{Fillmore--Springer--Cnops construction}
  (FSCc) . Again, \(C\) shall be treated up to the equivalence
  relation \(C \sim tC\) for all real \(t\neq 0\).  Then, the linear
  action~\eqref{eq:gl2-linear-map} corresponds to some action on
  \(2\times 2\) cycle matrices by the intertwining identity:
\begin{equation}
  \label{eq:intertwine-vect-matrix}
  \begin{pmatrix}
    -\bar{w}'_2&\bar{w}'_1
  \end{pmatrix}
  \begin{pmatrix}
    \bar{L}'&-m'\\
    k'&-{L}'
  \end{pmatrix}
  \begin{pmatrix}
    {w}'_1\\w'_2
  \end{pmatrix}=
  \begin{pmatrix}
    -\bar{w}_2&\bar{w}_1
  \end{pmatrix}
  \begin{pmatrix}
    \bar{L}&-m\\
    k&-{L}
  \end{pmatrix}
  \begin{pmatrix}
    {w}_1\\w_2
  \end{pmatrix}.
\end{equation}
 Explicitly, for \(M\in\mathrm{GL}_2(\Space{C}{})\) those actions are:
 \begin{equation}
   \label{eq:cycle-matrice-transformation}
  \begin{pmatrix}
    {w}'_1\\w'_2
  \end{pmatrix}=
  M
  \begin{pmatrix}
    {w}_1\\w_2
  \end{pmatrix},
  \quad\text{and}\quad
  \begin{pmatrix}
    \bar{L}'&-m'\\
    k'&-{L}'
  \end{pmatrix}=
  \bar{M}
  \begin{pmatrix}
    \bar{L}&-m\\
    k&-{L}
  \end{pmatrix}M^{-1}\,,
\end{equation}
where \(\bar{M}\) is the component-wise complex conjugation of
\(M\). Note, that FLT \(M\)~\eqref{eq:FLT-defn} corresponds to a
linear transformation \(C\mapsto M(C):=\bar{M}CM^{-1}\) of cycle matrices
in~\eqref{eq:cycle-matrice-transformation}. A quick calculation shows
that \(M(C)\) indeed has real off-diagonal elements as
required for a FSCc matrix.

This paper essentially depends on the following
\begin{prop}
  Define a \emph{cycle product} of two cycles \(C\) and \(C'\) by:
  \begin{equation}
    \label{eq:cycle-product-defn}
    \scalar{C}{C'}:=\tr (C\bar{C}')=L\bar{L}'+\bar{L}L'-mk'-km'.
  \end{equation}
  Then, the cycle product is FLT-invariant:
  \begin{equation}
    \label{eq:product-invariance}
    \scalar{M(C)}{M(C')} = \scalar{C}{C'}\qquad  \text{for any }M\in
  \mathrm{SL}_2(\Space{C}{})\,.
  \end{equation}
\end{prop}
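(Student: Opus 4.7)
My plan is to treat the cycle product as a trace and exploit its cyclic invariance; this reduces the invariance claim to a one-line manipulation. First, however, the stated explicit form $L\bar{L}'+\bar{L}L'-mk'-km'$ needs to be verified as equal to $\tr(C\bar{C}')$. This is a direct $2\times 2$ matrix multiplication using $C=\begin{pmatrix}\bar{L}&-m\\k&-L\end{pmatrix}$ and $\bar{C}'=\begin{pmatrix}L'&-m'\\k'&-\bar{L}'\end{pmatrix}$: one reads off the diagonal entries of $C\bar{C}'$, namely $\bar{L}L'-mk'$ and $L\bar{L}'-km'$, and sums them.

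For the invariance proper, I start from the intertwining formula $M(C)=\bar{M}CM^{-1}$ recorded in~\eqref{eq:cycle-matrice-transformation}. Taking complex conjugates and using $\overline{\bar{M}}=M$ together with $\overline{M^{-1}}=(\bar{M})^{-1}$ gives $\overline{M(C')}=M\bar{C}'\bar{M}^{-1}$. Multiplying telescopes the middle factors $M^{-1}M=I$:
\[
M(C)\,\overline{M(C')}
= \bar{M}\,C\,M^{-1}\cdot M\,\bar{C}'\,\bar{M}^{-1}
= \bar{M}\,(C\bar{C}')\,\bar{M}^{-1}.
\]
The cyclic property of the trace now yields $\tr\bigl(\bar{M}(C\bar{C}')\bar{M}^{-1}\bigr)=\tr(C\bar{C}')$, which is exactly $\scalar{M(C)}{M(C')}=\scalar{C}{C'}$.

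I do not anticipate a substantial obstacle: the proof is essentially a one-line calculation once the right arrangement of factors is spotted. The argument never uses $\det M=1$, so the cycle product is in fact invariant under the larger group $\mathrm{GL}_2(\Space{C}{})$; the restriction to $\mathrm{SL}_2(\Space{C}{})$ in the statement is only a convenient normalisation of matrix representatives. The one point that requires care is the complex-conjugation rule $\overline{M^{-1}}=(\bar{M})^{-1}$, which is what makes the central factors $M^{-1}$ and $M$ genuinely cancel; without this observation one might expect a residual determinantal factor that in reality is absent.
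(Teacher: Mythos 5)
Your proof is correct and follows essentially the same route as the paper: conjugate the intertwining formula to get \(\overline{M(C')}=M\bar{C}'\bar{M}^{-1}\), telescope the middle factors, and invoke the cyclic invariance of the trace. One caution about your closing aside: for a fixed matrix \(M\in\mathrm{GL}_2(\Space{C}{})\) the computation indeed never uses \(\det M=1\), but replacing \(M\) by \(\alpha M\) (the same FLT) multiplies \(\bar{M}CM^{-1}\) by \(\bar{\alpha}/\alpha\) and hence the product by \((\bar{\alpha}/\alpha)^2\), which is why the paper reserves genuine \(\mathrm{GL}_2\)-invariance for the normalised product~\eqref{eq:norm-cycle-prod} and states the plain product's invariance only for \(\mathrm{SL}_2(\Space{C}{})\).
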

\begin{proof}
  Indeed:
  \begin{align*}
    \scalar{M(C)}{M(C')} &= \tr(M(C)\overline{M(C')})\\
    &=\tr(\bar{M}CM^{-1} M\bar{C}'\bar{M}^{-1})\\
    &=\tr(\bar{M}C\bar{C}'\bar{M}^{-1})\\
    &=\tr(C\bar{C}')\\
    &=\scalar{C}{C'},
  \end{align*}
  using the invariance of trace.
\end{proof}
Note that the cycle product~\eqref{eq:cycle-product-defn} is
\emph{not} positive definite, it produces a Lorentz-type metric in
\(\Space{R}{4}\). Here are some relevant examples of geometric
properties expressed through the cycle product:
\begin{example}
  \label{ex:cycle-product-geometric}
  \begin{enumerate}
  \item If \(k=1\) (and \(C\) is a proper circle), then \(\scalar{C}{C}\!/2\)
    is equal to the square of radius of \(C\). In particular
    \(\scalar{C}{C}=0\) indicates a zero-radius circle representing a
    point.
  \item If \(\scalar{C_1}{C_2}=0\) for non-zero radius cycles \(C_1\)
    and \(C_2\), then they intersects at the right angle.
  \item If \(\scalar{C_1}{C_2}=0\) and \(C_2\) is zero-radius circle, then
    \(C_1\) passes the point represented by \(C_2\). 
  \end{enumerate}
\end{example}
In general, a combination
of~\eqref{eq:cycle-matrice-transformation}
and~\eqref{eq:product-invariance} yields that a consideration of FLT
in \(\Space{C}{}\) can be replaced by linear algebra in the space of
cycles \(\Space{R}{4}\) (or rather \(P\Space{R}{3}\)) with an indefinite
metric, see~\cite{GohbergLancasterRodman05a} for the latter.

\begin{figure}[htbp]
  \centering
  \includegraphics[scale=.9]{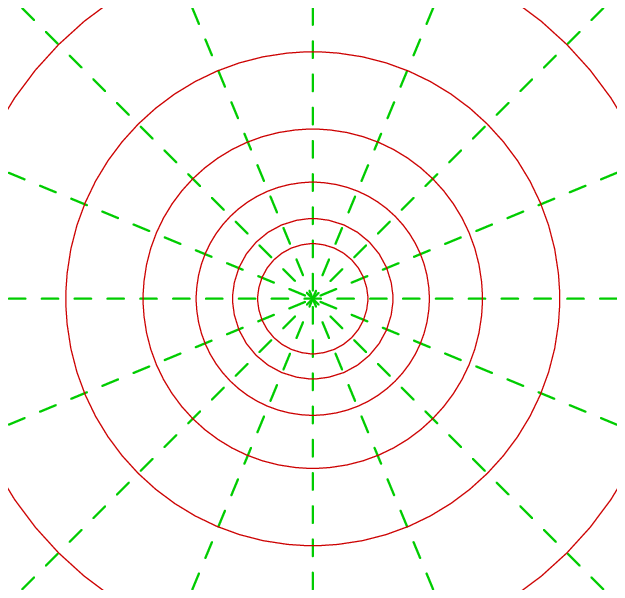}\quad
  \includegraphics[scale=1]{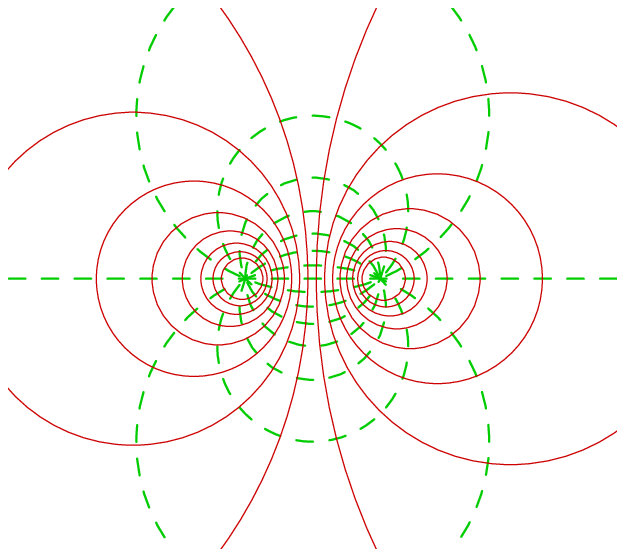}
  \caption[Elliptic and hyperbolic pencils]{Orthogonal elliptic
    (green-dashed) and hyperbolic (red-solid) pencils of cycles. Left
    drawing shows the standard case and the right---generic, which is
    the image of the standard pencils under FLT.}
  \label{fig:ellipt-hyperb-penc}
\end{figure}

A spectacular (and needed later) illustration of this approach is
orthogonal pencils of cycles. Consider a collection of all cycles
passing two different points in \(\Space{C}{}\), it is called an
\emph{elliptic pencil}. A beautiful and non-elementary fact of the
Euclidean geometry is that cycles orthogonal to every cycle in the
elliptic pencil fill the entire plane and are disjoint, the family is
called a \emph{hyperbolic pencil}. The statement is obvious in the
standard arrangement when the elliptic pencil is formed by straight
lines---cycles passing the origin and infinity. Then, the hyperbolic
pencil consists of the concentric circles, see
Fig.~\ref{fig:ellipt-hyperb-penc}. For the sake of completeness, a
\emph{parabolic pencil} (not used in this paper) formed by all circles
touching a given line at a given point,
\cite{Kisil12a}*{Ex.~6.10} contains further extensions and illustrations.
See~\cite{Vasilevski08a}*{\S~11.8} for an example of cycle pencils'
appearance in operator theory.

This picture trivialises a bit in the language of cycles. A pencil of
cycles (of any type!) is a linear span \(t C_1+(1-t)C_2\) of two
arbitrary different cycles \(C_1\) and \(C_2\) from the pencil. Again,
this is easier to check for the standard pencils. A pencil is
elliptic, parabolic or hyperbolic depending on which inequality
holds~\cite{Kisil12a}*{Ex.~5.28.ii}:
\begin{equation}
  \label{eq:cauchy-schwartz-cycles}
  \scalar{C_1}{C_2}^2 
  \lesseqqgtr
  \scalar{C_1}{C_1}
  \scalar{C_2}{C_2}.
\end{equation}

Then, the orthogonality of cycles on the plane is exactly their
orthogonality as vectors with respect to the indefinite
cycle product~\eqref{eq:cycle-product-defn}. For cycles in the standard
pencils this is immediately seen from the explicit expression of the
product  \(\scalar{C}{C'}=L\bar{L}'+\bar{L}L'-mk'-km'\) in
cycle components. Finally,
linearization~\eqref{eq:cycle-matrice-transformation} of FLT in the
cycle space shows that a pencil (i.e. a linear span) is transformed
to a pencil and FLT-invariance~\eqref{eq:product-invariance} of the cycle
product guarantees that the orthogonality of two pencils is
preserved.

\begin{rem}
  \label{re:historic}
  A sketchy historic overview (we apologise for any important
  omission!)  starts from the concept of Lie sphere geometry,
  see~\cite{Benz07a}*{Ch.~3} for a detailed presentation. It unifies
  circles, lines and points, which all are called cycles in this
  context (analytically it is already
  in~\eqref{eq:cycle-def-real}). The main invariant property of Lie
  sphere geometry is \emph{tangential contact}. The first radical
  advance came from the observation that cycles (through their
  parameters in~\eqref{eq:cycle-def-real}) naturally form a linear or
  projective space, see~\citelist{\cite{Pedoe95a}
    \cite{Schwerdtfeger79a}*{Ch.~1}}. The second crucial step is the
  recognition that the cycle space carries out the FLT-invariant
  indefinite metric~\citelist{\cite{Benz07a}*{Ch.~3}
    \cite{Kirillov06}*{\S~F.4}}. At the same time some presentations
  of cycles by \(2\times 2\) matrices were
  used~\citelist{\cite{Simon11a}*{\S~9.2}
    \cite{Schwerdtfeger79a}*{Ch.~1} \cite{Kirillov06}*{\S~F.4}}. Their
  main feature is that FLT in \(\Space{C}{}\) corresponds to a some
  sort of linear transform by matrix conjugation in the cycle
  space. However, the metric in the cycle space was not expressed in
  terms of those matrices.

  All three ingredients---matrix presentation with linear structure
  and the invariant product--came happily together as
  Fillmore--Springer--Cnops construction (FSCc) in the context of
  Clifford algebras~\citelist{\cite{Cnops02a}*{Ch.~4}
    \cite{FillmoreSpringer90a}}. Regretfully, FSCc have not yet
  propagated back to the most fundamental case of complex numbers,
  cf.~\cite{Simon11a}*{\S~9.2} or somewhat cumbersome techniques used
  in~\cite{Benz07a}*{Ch.~3}. Interestingly, even the founding fathers
  were not always strict followers of their own techniques,
  see~\cite{FillmoreSpringer00a}.

  A combination of all three components of Lie cycle geometry within
  FSCc facilitates further development. It was discovered that for the
  smaller group \(\SL\) there exist more types---elliptic, parabolic
  and hyperbolic--of invariant metrics in the cycle 
  space~\citelist{\cite{Kisil06a} \cite{Kisil05a} \cite{Kisil12a}*{Ch.~5}}. Based on
  the earlier work~\cite{Kirillov06}, the key
  concept of Lie sphere geometry---\emph{tangency of two cycles} \(C_1\) and
  \(C_2\)--- was expressed through the cycle
  product~\eqref{eq:cycle-product-defn} as \cite{Kisil12a}*{Ex.~5.26.ii}:
  \begin{displaymath}
    \scalar{C_1+C_2}{C_1+C_2}=0\, \quad \text{for }C_1, C_2\text{
      normalised such
      that } \scalar{C_1}{C_1}=\scalar{C_2}{C_2}=1.
  \end{displaymath}
  (Furthermore, \(C_1+C_2\) is the zero-radius cycle representing the
  point of contact.)  FSCc is useful in consideration of the Poincar\'e
  extension of M\"obius maps~\cite{Kisil15a} and continued
  fractions~\cite{Kisil14a}. In theoretical physics FSCc nicely
  describes conformal compactifications of various space-time
  models~\citelist{\cite{HerranzSantander02b} \cite{Kisil06b}
    \cite{Kisil12a}*{\S~8.1}}. Last but not least, FSCc is behind the
  Computer Algebra System (CAS) operating in Lie sphere
  geometry~\citelist{\cite{Kisil05b} \cite{Kisil14b}}. FSCc equally
  well covers not only the field of complex numbers but rings of dual
  and double numbers as well~\cite{Kisil12a}. New usage of FSCc will
  be given in the following sections in applications to loxodromes.
\end{rem}

\section{Fractional Linear Transformations and Loxodromes}
\label{sec:fract-line-transf}

In aiming for a covariant description of loxodromes we start from the
following definition.
\begin{defn}
  \label{de;standard-log-spiral}
  A \emph{standard logarithmic spiral} (SLS) with parameter
  \(\lambda\in \Space{C}{}\) is the orbit of the point 
  \(1\) under the (disconnected) one-parameter subgroup of FLT of diagonal
  matrices
  \begin{equation}
    \label{eq:diagonal-subgroup}
    D_\lambda (t)=
  \begin{pmatrix}
    \pm \rme^{\lambda t/2} &0\\
    0 &\rme^{-\lambda t/2}
  \end{pmatrix}, \qquad t\in\Space{R}{}.
  \end{equation}
\end{defn}
\begin{rem}
  Our SLS is a \emph{union} of two branches, each of them is a
  logarithmic spiral in the common sense. The three-cycle
  parametrisation of loxodromes presented below will becomes less
  elegant if those two branches need to be separated. Yet, we draw
  just one ``positive'' branch on
  Fig.~\ref{fig:log-spir-loxodr-pencils} to make it more transparent.
\end{rem}

SLS is the solution of the differential equation \(z'=\lambda z\) with
the initial value \(z(0)=\pm 1\) and has the parametric equation
\(z(t)=\pm \rme^{\lambda t}\). Furthermore, we obtain the same orbit
for \(\lambda_1\) and \(\lambda_2\in \Space{C}{}\) if
\(\lambda_1=a\lambda_2\) for real \(a\neq 0\) through a
re-parametrisation of the time \(t\mapsto a t\). Thus, SLS is
identified by the point \([\Re (\lambda) : \Im (\lambda)]\) of the
real projective line \(P\Space{R}{}\).  Thereafter the following
classification is useful:
\begin{defn}
  \label{th:sls-classification}
  SLS is 
  \begin{itemize}
  \item \emph{positive}, if \(\Re(\lambda)\cdot \Im(\lambda) > 0\);
  \item \emph{degenerate}, if \(\Re(\lambda)\cdot \Im(\lambda) = 0\);
  \item \emph{negative}, if \(\Re(\lambda)\cdot \Im(\lambda) < 0\).
  \end{itemize}
\end{defn}
Informally: a positive SLS unwinds counterclockwise, a
negative---clockwise. Degenerate SLS is the unit
circle if \(\Im(\lambda)\neq 0\) and the punctured real axis
\(\Space{R}{}\setminus \{0\}\)
if \(\Re(\lambda)\neq 0\). If \( \Re(\lambda)=\Im(\lambda)=0\) then
SLS  is the single point \(1\). 
\begin{defn}
  \label{de:loxodromes}
  A \emph{logarithmic spiral} is the image of a SLS under a complex
  affine transformation \(z\mapsto \alpha z +\beta\), with \(\alpha\),
  \(\beta\in \Space{C}{}\).  A \emph{loxodrome} is an image of a SLS
  under a generic FLT~\eqref{eq:FLT-defn}.
\end{defn}
Obviously, a complex affine transformation is FLT with the upper
triangular matrix \(
\begin{pmatrix}
  \alpha & \beta\\0 &1
\end{pmatrix}
\). Thus, logarithmic spirals form an affine-invariant (but not
FLT-invariant) subset of loxodromes.  Thereafter, loxodromes (and their
degenerate forms---circles, straight lines and points) extend the
notion of cycles from the Lie sphere geometry,
cf. Rem.~\ref{re:historic}.

By the nature of Defn.~\ref{de:loxodromes}, the parameter \(\lambda\)
and the corresponding classification from
Defn.~\ref{th:sls-classification} remain meaningful for logarithmic
spirals and loxodromes. FLTs eliminate distinctions between circles and
straight lines, but for degenerate loxodromes
(\(\Re(\lambda)\cdot \Im(\lambda) = 0\)) we still can note the difference
between two cases of \(\Re(\lambda)\neq 0\) and
\(\Im(\lambda)\neq 0\): orbits of former are whole circles (straight
lines) while latter orbits are only arcs of circles (segments of
lines).

The immediate consequence of Defn.~\ref{de:loxodromes} is 
\begin{prop}
  The collection of all loxodromes is a FLT-invariant family. 
  Degenerate loxodromes---(arcs of) circles and (segments) of straight
  lines---form a FLT-invariant subset of loxodromes.
\end{prop}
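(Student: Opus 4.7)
The plan rests on two almost trivial observations: that FLTs form a group under composition (visible from the matrix realisation~\eqref{eq:gl2-linear-map}), and that a loxodrome is \emph{defined} as an FLT-image of an SLS. Both halves of the proposition will follow from composing FLTs.

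For the first sentence I would take an arbitrary loxodrome $L$. By Definition~\ref{de:loxodromes} we have $L = F(S)$ for some FLT $F$ and some SLS $S$. Given any FLT $G$, the composition $G \circ F$ corresponds on the matrix side to the product of the matrices of $G$ and $F$ in $\mathrm{GL}_2(\Space{C}{})$, hence is itself an FLT. Thus $G(L) = (G \circ F)(S)$ is once again the FLT-image of an SLS, and therefore a loxodrome. The argument is essentially a one-liner.

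For the second sentence I would restrict the same argument to the subclass of degenerate loxodromes, i.e.\ those admitting a presentation $L = F(S)$ with $S$ a degenerate SLS ($\Re(\lambda)\cdot\Im(\lambda)=0$). The three possibilities for such an $S$ --- the unit circle, the punctured real axis, or the single point $\{1\}$ --- are all (subsets of) cycles in the sense of~\eqref{eq:cycle-def-real}. Since~\eqref{eq:cycle-matrice-transformation} carries the cycle matrix of a cycle to another cycle matrix, FLTs map (generalised) cycles to (generalised) cycles. Consequently $(G \circ F)(S)$ is again an arc of a circle, a segment of a line, or a point, showing that $G(L)$ is a degenerate loxodrome.

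I do not anticipate a genuine obstacle: the statement is almost a tautology given Definition~\ref{de:loxodromes} together with the group structure of FLTs, augmented by the FSCc observation that the family of generalised cycles is itself FLT-invariant. The only subtlety worth flagging in a careful write-up --- but not actually needed for the stated invariance --- is that the classification by $\lambda$ into positive/degenerate/negative is intrinsic to $L$ and does not depend on the choice of presentation $(F,S)$; here it suffices that \emph{some} degenerate presentation exists, and the closure argument above produces one.
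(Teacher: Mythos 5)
Your argument is correct and is exactly the reasoning the paper has in mind: the authors state this proposition without proof as an ``immediate consequence'' of Defn.~\ref{de:loxodromes}, namely that an FLT-image of an FLT-image of a SLS is again an FLT-image of a SLS by the group property, restricted to degenerate SLS for the second sentence. Your extra remark that only the existence of \emph{some} degenerate presentation is needed (not the well-definedness of \(\lambda\)) is a sensible clarification, and the appeal to FSCc cycle-invariance, while not strictly necessary, matches the paper's framework.
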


As mentioned above, SLS is completely characterised by the point
\([\Re (\lambda) : \Im (\lambda)]\) of the real projective line
\(P\Space{R}{}\) extended by the additional point
\([0 : 0]\)\footnote{Pedantic consideration of the trivial case
  \(\Re (\lambda) = \Im (\lambda)=0\) will be often omitted in the
  following discussion.}.  In the standard way,
\([\Re (\lambda) : \Im (\lambda)]\) is associated with the real value
\(\tilde{\lambda}:=2\pi \Re (\lambda)/\Im (\lambda)\) extended by
\(\infty\) for \(\Im (\lambda)=0\) and symbol \(\frac{0}{0}\) for
the \(\Re(\lambda)=\Im(\lambda)=0\) cases. Geometrically,
\(a=\exp(\tilde{\lambda})\in\Space[+]{R}{}\) represents the next point
after \(1\), where the given SLS branch meets the real positive
half-axis after one full counterclockwise turn. Obviously, \(a>1\) and
\(a<1\) for positive and negative SLS, respectively.  For a degenerate
SLS:
\begin{enumerate}
\item with \(\Im(\lambda)\neq 0\) we obtain \(\tilde{\lambda}=0\) and \(a=1\);
\item with \(\Re(\lambda)\neq 0\) we consistently \emph{define} \(a=\infty\). 
\end{enumerate}

In essence, a loxodrome \(\Lambda\) is defined by the pair
\((\tilde{\lambda}, M)\), where \(M\) is the FLT mapping \(\Lambda\) to SLS
with the parameter \(\tilde{\lambda}\). While \(\tilde{\lambda}\) is completely
determined by \(\Lambda\), a map \(M\) is not.
\begin{prop}
  \begin{enumerate}
  \item The subgroup of FLT which maps SLS with the parameter
    \(\tilde{\lambda}\) to itself consists of products
    \(D_{\tilde{\lambda}}(t) R^{\varepsilon}\), \(\varepsilon=0,1\) of
    transformations \(D_{\tilde{\lambda}}(t)=D_\lambda(t)\), \(\lambda=\tilde{\lambda}+2\pi\rmi\)~\eqref{eq:diagonal-subgroup} and
    branch-swapping reflections:
    \begin{equation}
      \label{eq:reflections}
      R=    \begin{pmatrix}
        0&-1\\1&0
      \end{pmatrix}: z\mapsto - z^{-1}\,.
    \end{equation}
  \item Pairs \((\tilde{\lambda},M)\) and \((\tilde{\lambda}',M')\) define the same
    loxodrome if and only if
    \begin{enumerate}
    \item \(\tilde{\lambda} = \tilde{\lambda} '\);
    \item \(M=D_{\tilde{\lambda}}(t) R^{\varepsilon} M'\) for \(\varepsilon=0,1\)
      and \(t\in\Space{R}{}\).
    \end{enumerate}
  \end{enumerate}
\end{prop}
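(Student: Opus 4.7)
The plan is to prove part~(1) first, then deduce part~(2) by applying part~(1) to the FLT $N=M(M')^{-1}$, which sends the SLS of parameter $\tilde{\lambda}'$ to the SLS of parameter $\tilde{\lambda}$ whenever both pairs parametrise the same loxodrome.

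For part~(1), the forward inclusion would be a direct verification from the parametric form $\{\pm\rme^{\lambda s}:s\in\Space{R}{}\}$ with $\lambda=\tilde{\lambda}+2\pi\rmi$: the map $D_{\tilde{\lambda}}(t)$ sends $\pm\rme^{\lambda s}$ to $\pm\rme^{\lambda(s+t)}$ (the diagonal signs combine into the outer $\pm$), while $R$ sends $\pm\rme^{\lambda s}$ to $\mp\rme^{-\lambda s}$; both outputs remain on the SLS. For the converse, I would observe that any FLT stabilising a non-degenerate SLS must permute its two accumulation points in $P\Space{C}{}$, which are precisely the fixed points $0$ and $\infty$ of the flow $D_{\lambda}$. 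Such an FLT is therefore of the form $z\mapsto\alpha z$ or $z\mapsto\beta/z$. Since the full SLS is a multiplicative subgroup of $\Space{C}{}\setminus\{0\}$ (containing $\pm 1$) and $N(1)$ must lie on it, $\alpha$ (resp.\ $\beta$) has the form $\pm\rme^{\lambda t}$, and hence $N$ equals $D_{\tilde{\lambda}}(t)$ (resp.\ $D_{\tilde{\lambda}}(t)R$) for a suitable $t$.

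For part~(2), the ``$\Leftarrow$'' direction is immediate from part~(1). For ``$\Rightarrow$'' the key substep is to establish $\tilde{\lambda}=\tilde{\lambda}'$. I would argue that $\tilde{\lambda}$ is a conformal invariant of the SLS---namely, up to the factor $2\pi$, the constant angle between the spiral and the radial lines through the flow's two fixed points, i.e.\ the pitch of the spiral. Since FLTs are conformal and $N=M(M')^{-1}$ carries the accumulation set $\{0,\infty\}$ of the one SLS to that of the other, this pitch is preserved, forcing $\tilde{\lambda}=\tilde{\lambda}'$. Once this is in hand, part~(1) applied to $N$ yields the required factorisation $M=D_{\tilde{\lambda}}(t)R^{\varepsilon}M'$.

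The main obstacle I foresee is a clean handling of the degenerate cases $\tilde{\lambda}\in\{0,\infty\}$ (and the trivial $\frac{0}{0}$). When the SLS collapses to the unit circle ($\tilde{\lambda}=0$, $\Im\lambda\neq 0$), its full FLT-stabiliser is the three-dimensional group preserving the unit disk, strictly larger than $\{D_{\tilde{\lambda}}(t)R^{\varepsilon}\}$; a similar issue arises for the punctured real axis. I would either impose the implicit understanding that $N$ intertwines the one-parameter flow, or simply dispense with these cases by a short separate remark, rather than trying to absorb them into the generic argument.
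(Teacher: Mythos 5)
The paper states this proposition without any proof, so your argument is not competing with a different route in the text---it supplies the details the authors treat as evident, and it does so correctly for non-degenerate SLS. The forward inclusion by direct computation, the converse via the observation that an FLT preserving the spiral must permute its two limit points \(0\) and \(\infty\) and is therefore \(z\mapsto\alpha z\) or \(z\mapsto\beta/z\) with \(\alpha\) (resp.\ \(\beta\)) lying on the spiral, and the reduction of part~(2) to part~(1) through \(N=M(M')^{-1}\) are all sound. One tightening suggestion: the appeal to conformal invariance of the ``pitch'' to obtain \(\tilde{\lambda}=\tilde{\lambda}'\) can be replaced by reusing your own dichotomy---since \(N\) permutes \(\{0,\infty\}\), the image of the SLS with parameter \(\lambda'\) is \(\{\pm\alpha\rme^{\lambda' s}\}\) or \(\{\pm\beta\rme^{-\lambda' s}\}\), and equating this set with \(\{\pm\rme^{\lambda s}\}\) (for instance after taking logarithms, where both sets become unions of parallel lines spaced by \(\pi\rmi\)) forces \(\lambda'\) to be a nonzero real multiple of \(\lambda\), i.e.\ \(\tilde{\lambda}=\tilde{\lambda}'\), and yields \(\alpha\) (or \(\beta\)) on the SLS at the same time. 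This also sidesteps a loose formulation: the constant angle \(\theta\) between the spiral and the radial lines satisfies \(\cot\theta=\tilde{\lambda}/(2\pi)\) (compare the angle \(\arctan(\tilde{\lambda}/(2\pi))\) with the concentric circles used in Procedure~\ref{proc:intersection-angle}), so it is not \(\tilde{\lambda}/(2\pi)\) itself, although it still determines \(\tilde{\lambda}\). Finally, your caveat about degenerate SLS is genuinely needed---for \(\tilde{\lambda}=0\) the unit circle has a three-parameter FLT stabiliser, so both parts of the proposition require non-degeneracy---and disposing of these cases in a separate remark is consistent with the paper's footnote that pedantic treatment of degenerate and trivial cases is omitted.
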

\begin{rem}
  Often loxodromes appear as orbits of one-parameter continuous
  subgroup of loxodromic FLT, which are characterised by a non-real
  trace~\citelist{\cite{Beardon95}*{\S~4.3} \cite{Simon11a}*{\S~9.2}
    \cite{Vasilevski08a}*{\S~9.2}}. In the above notations such a
  subgroup is \(MD_{\tilde{\lambda}}(t)M^{-1}\), thus the common
  presentation is not much different from the above
  \((\tilde{\lambda}, M)\)-parametrisation.
\end{rem}

\section{Three-cycle Parametrisation of Loxodromes}
\label{sec:loxodr-encod-as}

Although pairs \((\tilde{\lambda}, M)\) provide a parametrisation of
loxodromes, the following alternative is more operational. It is
inspired by the orthogonal pairs of elliptic and hyperbolic pencils
described in Section~\ref{sec:preliminaries}.
\begin{defn}
  A \emph{three-cycle parametrisation} \(\{C_1,C_2,C_3\}\) of a
  non-degenerate SLS \(\tilde{\lambda}\) satisfies the following conditions:
  \begin{enumerate}
  \item \(C_1\) is the straight line passing the origin;
  \item \(C_2\) and \(C_3\)  are two circles  with their
    centres at the origin;
  \item \(\Lambda\) passes the intersection points \(C_1\cap C_2\) and
    \(C_1\cap C_3\); and
  \item A branch of  \(\Lambda\) makes one full
    counterclockwise turn between  intersection points \(C_1\cap C_2\) and
    \(C_1\cap C_3\) belonging to a ray in \(C_1\) from the origin. 
  \end{enumerate}
\end{defn}
We say that three-cycle parametrisation is \emph{standard} if \(C_1\)
is the real axis and \(C_2\) is the unit circle, then
\(C_3=\{z: \modulus{z}=\exp(\tilde{\lambda})\}\).  A three-cycle
parametrisation can be consistently extended to a degenerate SLS
\(\Lambda\) as follows:
\begin{itemize}
\item[\(\tilde{\lambda}=0\):] any straight line \(C_1\) passing the origin and the unit circles
  \(C_2=C_3=\Lambda\);
\item[\(\tilde{\lambda}=\infty\):] the real axis as \(C_1=\Lambda\), the unit circle as \(C_2\)
  and \(C_3=(0,0,0,1)\) being the zero-radius circle at infinity. 
\end{itemize}

Since cycles are elements of the projective space, the following
\emph{normalised cycle product}:
\begin{equation}
  \label{eq:norm-cycle-prod}
  \nscalar{C_1}{C_2}:=\frac{\scalar{C_1}{C_2}}{\sqrt{\scalar{C_1}{C_1}
          \scalar{C_2}{C_2}}}
\end{equation}
is more meaningful than the cycle
product~\eqref{eq:cycle-product-defn} itself. Note that,
\(\nscalar{C_1}{C_2}\) is defined only if neither \(C_1\) nor \(C_2\)
is a zero-radius cycle (i.e. a point). Also, the normalised cycle
product is \(\mathrm{GL}_2(\Space{C}{})\)-invariant in comparison to
\(\mathrm{SL}_2(\Space{C}{})\)-invariance
in~\eqref{eq:product-invariance}.

A reader will instantly recognise the familiar pattern of the cosine
of angle between two vectors appeared
in~\eqref{eq:norm-cycle-prod}. Simple calculations show that this
geometric interpretation 
is very explicit in two special cases of our interest.
\begin{lem}
  \label{le:norm-prod-geometr}
  \begin{enumerate}
    \item Let \(C_1\) and \(C_2\) be two straight lines passing the
    origin with slopes \(\tan\phi_1\) and \(\tan\phi_2\)
    respectively. Then \(C_2= D_{x+\rmi y}(1) C_1\) for
      transformation~\eqref{eq:diagonal-subgroup} with any
      \(x\in\Space{R}{}\) and \(y=\phi_2-\phi_1\) satisfying the relations:
    \begin{equation}
      \label{eq:inner-product-lines}
      \nscalar{C_1}{C_2}=\cos y\,.
    \end{equation}
  \item Let \(C_1\) and \(C_2\) be two circles centred at the origin
    and radii \(r_1\) and \(r_2\) respectively. Then
    \(C_2= D_{x+\rmi y}(1) C_1\) for
    transformation~\eqref{eq:diagonal-subgroup} with any
    \(y\in\Space{R}{}\) and \(x=\log(r_2) -\log(r_1)\) satisfying the
    relations:
    \begin{equation}
      \label{eq:inner-product-concentric}
      \nscalar{C_1}{C_2}
      =\cosh x\,.
      \end{equation}
  \end{enumerate}
\end{lem}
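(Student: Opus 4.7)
The plan is to treat each case as a direct FSCc computation, picking convenient homogeneous representatives for the cycles and then substituting into the definitions of the cycle product and its normalised version.

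For part (1), I will represent the line through the origin with slope $\tan\phi$ as the cycle with $k=0$, $m=0$, $L = \tfrac{\rmi}{2}\rme^{\rmi\phi}$ (this is the coefficient vector of the equation $x\sin\phi - y\cos\phi = 0$). With these representatives, only the $L\bar{L}'+\bar{L}L'$ terms of the cycle product~\eqref{eq:cycle-product-defn} survive, giving $\scalar{C_1}{C_2} = \tfrac{1}{2}\cos(\phi_2-\phi_1)$ and $\scalar{C_i}{C_i} = \tfrac{1}{2}$. Dividing as in~\eqref{eq:norm-cycle-prod} yields $\cos y$ with $y=\phi_2-\phi_1$. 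To verify the transformation claim, I will apply the cycle-matrix action $C\mapsto \bar{M}CM^{-1}$ from~\eqref{eq:cycle-matrice-transformation} to $D_{x+\rmi y}(1)$; since $D_{x+\rmi y}(1)$ is diagonal, the conjugation acts diagonally on the off-diagonal entries of $C_1$, sending $\bar{L}_1\mapsto \rme^{-\rmi y}\bar{L}_1$. Equating with the representative of $C_2$ forces $y=\phi_2-\phi_1$ while $x$ remains free, reflecting that a dilation fixes every line through the origin.

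For part (2), I will represent a circle of radius $r$ centred at the origin as the cycle with $k=1$, $L=0$, $m=-r^2$. Now only the $-mk'-km'$ terms of~\eqref{eq:cycle-product-defn} survive, so $\scalar{C_1}{C_2}=r_1^2+r_2^2$ and $\scalar{C_i}{C_i}=2r_i^2$. Substitution into~\eqref{eq:norm-cycle-prod} produces $\tfrac{r_1^2+r_2^2}{2r_1r_2}$, which I will recognise as $\cosh(\log r_2-\log r_1)$ via the identity $\cosh t = \tfrac{1}{2}(\rme^t+\rme^{-t})$. The transformation claim is again checked by direct conjugation: the FSCc matrix for $C_1$ is antidiagonal, so conjugation by the diagonal $D_{x+\rmi y}(1)$ scales its off-antidiagonal entries by $\rme^{\pm x}$ (the $\rmi y$ contributions cancel), and after rescaling to a representative with $k=1$ the parameter $m$ becomes $-\rme^{2x}r_1^2$; matching $r_2^2 = \rme^{2x}r_1^2$ gives $x=\log r_2-\log r_1$ with $y$ free.

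The two computations are essentially routine once the right homogeneous representatives are chosen, so I do not anticipate a genuine obstacle. The most delicate point is conceptual rather than technical: cycles are only defined up to nonzero real scalar, so one must either pick representatives consistently throughout or rely on the fact that $\nscalar{\,\cdot\,}{\,\cdot\,}$ is manifestly scale-invariant in each argument. I will state the chosen representatives at the beginning of each case and remark that the final answer depends only on the projective class, which makes the identification $C_2 = D_{x+\rmi y}(1)C_1$ meaningful up to the stated freedom in $x$ or $y$.
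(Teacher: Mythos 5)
Your computation is correct and is precisely the ``simple calculations'' the paper leaves to the reader: with your chosen representatives the cycle product gives $\tfrac12\cos(\phi_2-\phi_1)$ for the lines and $r_1^2+r_2^2$ for the concentric circles, and the diagonal conjugation $C\mapsto\bar M C M^{-1}$ check of $C_2=D_{x+\rmi y}(1)C_1$ is right in both cases. The one caveat worth a word is that $\nscalar{\cdot}{\cdot}$ is invariant only under \emph{positive} rescalings of a representative (a negative factor flips its sign), so fixing the representatives explicitly, as you do, is indeed the correct way to make the stated formulas unambiguous.
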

Note the explicit elliptic-hyperbolic analogy
between~\eqref{eq:inner-product-lines}
and~\eqref{eq:inner-product-concentric}. By the way, both expressions
produce real \(x\) and \(y\) due to
inequality~\eqref{eq:cauchy-schwartz-cycles} for the respective types
of pencils. Now we can deduce the following properties of three-cycle
parametrisation.
\begin{prop}
  For a given SLS \(\Lambda\) with a
    parameter \(\lambda\):
  \begin{enumerate}
  \item Any transformation~\eqref{eq:diagonal-subgroup} maps a
    three-cycle parametrisation of \(\Lambda\) to another three-cycle
    parametrisation of \(\Lambda\).
  \item For any two three-cycle parametrisations \(\{C_1,C_2,C_3\}\)
    and \(\{C_1',C_2',C_3'\}\) there exists \(t_0\in\Space{R}{}\) such
    that \(C_j'=D_\lambda(t_0)C_j\) for
    \(D_\lambda(t_0)\)~\eqref{eq:diagonal-subgroup} and
    \(j=1, 2,3\).
  \item The parameter \(\tilde{\lambda}=2\pi\Re(\lambda)/\Im(\lambda)\) of
    SLS can be recovered form its three-cycle parametrisation by the
    relation:
    \begin{equation}
      \label{eq:lambda-from-triple}
      \tilde{\lambda}= 
      \arccosh\nscalar{C_2}{C_3}
      \quad \text{and} \quad
      \lambda \sim \tilde{\lambda}+2\pi\rmi\,.
    \end{equation}
  \end{enumerate}
\end{prop}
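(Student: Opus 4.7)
The key observation is that, with $\lambda = \tilde{\lambda}+2\pi\rmi$, the transformation $D_\lambda(t)$ acts on $\Space{C}{}$ as multiplication $z \mapsto \pm \rme^{\lambda t}z = \pm\rme^{\tilde{\lambda} t}(\cos 2\pi t + \rmi\sin 2\pi t)\,z$. This is a dilation about the origin by $\rme^{\tilde{\lambda} t}$ composed with rotation by $2\pi t$; in particular it fixes $0$ and $\infty$, and therefore permutes lines through the origin as well as circles centred at the origin.

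For claim (i), I verify the four defining conditions on the image triple $\{D_\lambda(t)C_1, D_\lambda(t)C_2, D_\lambda(t)C_3\}$. The geometric-type conditions (i)--(ii) follow from the remark above. The incidence condition (iii) is preserved because $\Lambda$ is $D_\lambda$-invariant by Defn.~\ref{de;standard-log-spiral}, so $D_\lambda(t)(C_1\cap C_j) \subset D_\lambda(t)\Lambda = \Lambda$. Condition (iv) is preserved because $D_\lambda(t)$ corresponds to the reparametrisation $s\mapsto s+t$ of the orbit $\rme^{\lambda s}$, which leaves unchanged the unit-length gap in $s$ between consecutive hits of $\Lambda$ on a fixed ray.

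For claim (ii), write $\phi_1, \phi_1'$ for the angles of the lines $C_1, C_1'$ and $r_j, r_j'$ for the radii of $C_j, C_j'$ ($j=2,3$). Condition (iv) forces $r_3 = r_2\rme^{\tilde{\lambda}}$ and $r_3' = r_2'\rme^{\tilde{\lambda}}$, and also selects a distinguished ray of $C_1$ on which both intersections lie; WLOG call it the positive ray. The branch $\rme^{\lambda s}$ meets that ray precisely at the discrete set of radii $\{\rme^{\tilde{\lambda}(\phi_1/(2\pi)+k)} : k\in\Space{Z}{}\}$, so condition (iii) yields $\log r_2 = \tilde{\lambda}(\phi_1/(2\pi)+k_2)$ for some integer $k_2$, and similarly $\log r_2' = \tilde{\lambda}(\phi_1'/(2\pi)+k_2')$. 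Setting $t_0 := (\log r_2' - \log r_2)/\tilde{\lambda}$, one gets $\rme^{\tilde{\lambda} t_0} = r_2'/r_2 = r_3'/r_3$, so $D_\lambda(t_0)$ scales $C_2\mapsto C_2'$ and $C_3\mapsto C_3'$; simultaneously $2\pi t_0 \equiv \phi_1' - \phi_1 \pmod{2\pi}$, hence the rotation component of $D_\lambda(t_0)$ sends the line $C_1$ to $C_1'$. For claim (iii), Lemma~\ref{le:norm-prod-geometr}(2) applied to the concentric pair $C_2, C_3$ with $\log(r_3/r_2) = \tilde{\lambda}$ gives $\nscalar{C_2}{C_3} = \cosh\tilde{\lambda}$, so $\tilde{\lambda} = \arccosh\nscalar{C_2}{C_3}$; the relation $\lambda \sim \tilde{\lambda}+2\pi\rmi$ is then the normalisation convention fixed just before the statement.

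The main obstacle is the compatibility check in part (ii): the dilation factor determines $t_0$ uniquely from $r_2, r_2'$, and one must separately verify that the induced rotation angle $2\pi t_0$ agrees modulo $2\pi$ with $\phi_1' - \phi_1$. This is not automatic; it holds only because the three-cycle parametrisation forces a tight congruence between $\log r_2$ and $\phi_1$, reflecting the discrete arithmetic of the intersections of $\Lambda$ with a ray through the origin. That congruence is precisely the geometric content underlying the essential uniqueness of the three-cycle parametrisation up to the $D_\lambda$-action.
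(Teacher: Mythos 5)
Your argument is correct, and for parts (i) and (iii) it is essentially the paper's: the paper declares (i) obvious and obtains (iii) directly from Lemma~\ref{le:norm-prod-geometr}, i.e.\ from~\eqref{eq:inner-product-concentric} applied to the concentric pair \(C_2,C_3\) with \(\log(r_3/r_2)=\tilde{\lambda}\), exactly as you do. Where you genuinely diverge is part (ii). The paper's (three-line) proof produces \(t_0\) geometrically: since \(D_\lambda\) preserves \(\Lambda\) and its flow is transitive along a branch, choose \(t_0\) so that \(D_\lambda(t_0)\) transports the intersection point \(C_1\cap C_2\) to \(C_1'\cap C_2'\); because \(D_\lambda(t_0)\) permutes lines through the origin and circles centred at the origin, it then sends \(C_1\mapsto C_1'\), \(C_2\mapsto C_2'\), and \(C_3\mapsto C_3'\) via the one-turn condition. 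You instead define \(t_0=(\log r_2'-\log r_2)/\tilde{\lambda}\) from the dilation ratio and then check the angular compatibility \(2\pi t_0\equiv\phi_1'-\phi_1\pmod{2\pi}\) using the congruence \(\log r_2=\tilde{\lambda}(\phi_1/(2\pi)+k_2)\) that condition (iii) forces on the intersection data. The two routes are equivalent; yours has the merit of making explicit the radius--angle compatibility that the paper's point-transport choice of \(t_0\) absorbs silently (the paper simply asserts that the chosen map carries each \(C_j\) to \(C_j'\)), at the mild cost of dividing by \(\tilde{\lambda}\), so it covers only the non-degenerate case --- which is the case in which the three-cycle parametrisation is defined, and the degenerate cases are immediate anyway. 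One cosmetic point, inherited from the paper's own formula rather than introduced by you: \(\arccosh\nscalar{C_2}{C_3}\) recovers only \(\modulus{\tilde{\lambda}}\), since the cycle product cannot see the sign of \(\tilde{\lambda}\).
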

\begin{proof}
  The first statement is obvious.  For the second we take
  \(D_\lambda(t_0): \Lambda \rightarrow \Lambda \) which maps
  \(C_1\cap C_2\) to \(C_1'\cap C_2'\), this transformation 
  maps \(C_j\mapsto C_j'\) for \(j=1,2,3\). Finally, the last statement
  follows from~\eqref{eq:inner-product-concentric}. 
\end{proof}

Note that expression~\eqref{eq:lambda-from-triple} is
FLT-invariant. Since any loxodrome is an image of SLS under FLT we
obtain a three-cycle parametrisation of loxodromes as follows.
\begin{prop}
  \label{pr:loxodrome-triple}
  \begin{enumerate}
  \item Any three-cycle parametrisation \(\{C_1,C_2,C_3\}\) of SLS
    has the following FLT-invariant properties:
    \begin{enumerate}
    \item \label{it:orthogonal}
      \(C_1\) is orthogonal to \(C_2\) and \(C_3\);
    \item \label{it:disjoint}
      \(C_2\) and \(C_3\) either disjoint or coincide\footnote{Recall
        that if \(C_2=C_3\), then SLS is degenerate and coincide with \(C_2=C_3\).}.
    \end{enumerate}
  \item For any FLT \(M\) and a three-cycle 
    parametrisation  \(\{C_1',C_2',C_3'\}\) of SLS, three cycles \(C_j=M
    (C_j')\),  \(j=1,2,3\) satisfy the above conditions
    \eqref{it:orthogonal} and \eqref{it:disjoint}.
  \item \label{item:FLT-existance} For any triple of cycles
    \(\{C_1,C_2,C_3\}\) satisfying the above conditions
    \eqref{it:orthogonal} and \eqref{it:disjoint} there exist a
    FLT \(M\) such cycles \(\{M(C_1), M(C_2), M(C_3)\}\)
    provide a three-cycle parametrisation of SLS with the parameter
    \(\tilde{\lambda}\)~\eqref{eq:lambda-from-triple}. FLT \(M\) is
    uniquely defined by the additional condition that   \(\{M(C_1),
    M(C_2), M(C_3)\}\) is a \emph{standard} parametrisation of SLS.
  \end{enumerate}
\end{prop}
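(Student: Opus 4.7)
The plan is to dispatch parts (1) and (2) together by reducing to the standard parametrisation via FLT-invariance, and then to concentrate on the construction in part (3). For part (1), the standard parametrisation satisfies both properties by elementary Euclidean geometry: the real axis is a diameter of every concentric circle at the origin, so orthogonality holds, and two concentric circles of distinct radii are disjoint (coinciding only when \(\tilde\lambda=0\)). The preceding proposition shows that every three-cycle parametrisation of a given SLS is a \(D_{\lambda}(t)\)-image of the standard one, so the vanishing \(\scalar{C_1}{C_j}=0\) and the strict inequality in~\eqref{eq:cauchy-schwartz-cycles} both propagate by FLT-invariance~\eqref{eq:product-invariance}. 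Part (2) is then immediate: applying any further \(M\in\mathrm{SL}_2(\Space{C}{})\) preserves both the vanishing cycle product and the sign of the discriminant in~\eqref{eq:cauchy-schwartz-cycles}.

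Part (3) is the substantive reverse direction. Given a triple \(\{C_1,C_2,C_3\}\) satisfying \eqref{it:orthogonal} and \eqref{it:disjoint}, I would build \(M\) in stages. The span \(tC_2+(1-t)C_3\) is a hyperbolic pencil by~\eqref{eq:cauchy-schwartz-cycles}, and solving \(\scalar{tC_2+(1-t)C_3}{tC_2+(1-t)C_3}=0\) gives a real quadratic in \(t\) whose discriminant is positive precisely under condition~\eqref{it:disjoint}; its two real roots furnish the \emph{limit points} \(p\), \(q\) of the pencil (zero-radius cycles). Since \(C_1\) is orthogonal to both generators, it is orthogonal to the whole pencil, in particular to the zero-radius cycles at \(p\) and \(q\), so by Example~\ref{ex:cycle-product-geometric}(3) the cycle \(C_1\) passes through both \(p\) and \(q\). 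Choose an FLT \(M_0\) sending \(p\mapsto 0\) and \(q\mapsto\infty\); then \(M_0(C_1)\) is a cycle through \(0\) and \(\infty\), namely a line through the origin, while \(M_0(C_2)\) and \(M_0(C_3)\) are cycles orthogonal to every line through the origin, hence concentric circles at the origin. Compose with a rotation to align \(M_0(C_1)\) with the real axis and then with a real dilation to rescale \(M_0(C_2)\) to the unit circle. FLT-invariance of the normalised cycle product combined with Lemma~\ref{le:norm-prod-geometr}(2) pins the radius of \(M(C_3)\) to \(\exp(\tilde\lambda)\) with \(\tilde\lambda\) given by~\eqref{eq:lambda-from-triple}, delivering the standard parametrisation of an SLS with that parameter.

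For uniqueness, any two valid \(M\), \(M'\) differ by an FLT stabilising the standard triple; requiring a matrix \(\left(\begin{smallmatrix}a&b\\c&d\end{smallmatrix}\right)\) to preserve the real axis (real entries up to scale), the unit circle (\(a^{2}+b^{2}=c^{2}+d^{2}\) and \(ab=cd\)), and the circle of radius \(\exp(\tilde\lambda)\) (ruling out the inversion \(z\mapsto \pm 1/z\) when \(\tilde\lambda\neq 0\)) reduces the freedom to the projectively trivial residue already absorbed by the two-branch convention for SLS noted after Definition~\ref{de;standard-log-spiral}. The main obstacle I anticipate is not the formalism but the careful handling of the degenerate cases \(\tilde\lambda=0\) (when the hyperbolic pencil collapses, \(C_2=C_3\), and the limit points become the two intersections of \(C_1\) with \(C_2\)) and \(\tilde\lambda=\infty\) (when one limit point is already at infinity and \(C_1\) literally coincides with the loxodrome), where the construction of \(M_0\) and the interpretation of the standard form must be adjusted to fit the conventions listed after the definition of the three-cycle parametrisation.
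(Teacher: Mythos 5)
Your argument is correct and takes essentially the same route as the paper: parts (1)--(2) by FLT-invariance, and part (3) via the two zero-radius limit points of the hyperbolic pencil spanned by \(C_2,C_3\) (through both of which \(C_1\) passes), sent to \(0\) and \(\infty\) and then normalised --- this is exactly the paper's Procedure~\ref{proc:build-normalisation}, which packages your rotation-plus-dilation step as the unique FLT sending \(C_0\), \(C_u\in C_1\cap C_2\), \(C_\infty\) to \(0,1,\infty\) and fixes the labelling of the two limit points by a ``between'' condition matching the sign of \(\tilde{\lambda}\). Your handling of uniqueness and of the degenerate cases is at the same level of brevity as the paper's own proof.
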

\begin{proof}
  The first statement is obvious, the second follows because
  properties     \eqref{it:orthogonal} and \eqref{it:disjoint} are
  FLT-invariant.

  For \eqref{item:FLT-existance} in the degenerate case \(C_2=C_3\):
  any \(M\) which sends \(C_2=C_3\) to the unit circle will do the
  job.  If \(C_2\neq C_3\) we explicitly describe below the procedure, which
  produces FLT \(M\) mapping the loxodrome to SLS.
\end{proof}

  \begin{proc}
    \label{proc:build-normalisation}
    Two disjoint cycles \(C_2\) and \(C_3\) span a hyperbolic pencil
    \(H\) as described in Section~\ref{sec:preliminaries}. Then
    \(C_1\) belongs to the elliptic \(E\) pencil orthogonal to
    \(H\). Let \(C_0\) and \(C_\infty\) be the two zero-radius cycles
    (points) from the hyperbolic pencil \(H\).  Every cycle in \(E\),
    including \(C_1\), passes \(C_0\) and \(C_\infty\), we label those two
    in such a way that
    \begin{itemize}
    \item for a positive \(\tilde{\lambda}\) cycle \(C_3\) is between
      \(C_2\) and \(C_\infty\); and
    \item for a negative \(\tilde{\lambda}\) cycle \(C_3\) is between
      \(C_2\) and \(C_0\).
    \end{itemize}
    Here ``between'' for cycles means ``between'' for their
    intersection points with \(C_1\).  Finally, let \(C_u\) be any of
    two intersection points \(C_1\cap C_2\). Then, there exists
    the unique FLT \(M\) such that \(M: C_0\mapsto 0\), \(M: C_u\mapsto 1\)
    and \(M: C_\infty \mapsto \infty\). We will call \(M\) the
    \emph{standard FLT associated} to the three-cycle parametrisation
    \(\{C_1,C_2,C_3\}\) of the loxodrome.
  \end{proc}

\begin{rem}
  To complement the construction of the standard FLT \(M\) associated
  to the three-cycle parametrisation \(\{C_1,C_2,C_3\}\) from
  Procedure~\ref{proc:build-normalisation}, we can describe the inverse
  operation. For the loxodrome, which is the image of SLS with the
  parameter \(\lambda\) under FLT \(M\), we define the \emph{standard
    three-cycle parametrisation}
  \(\{M (\Space{R}{}), M(C_u), M(C_\lambda) \}\) as the image of the
  standard parametrisation of the SLS under \(M\).  Here \(\Space{R}{}\)
  is the real axis, \(C_u=\{z: \modulus{z}=1\}\) is the unit circle
  and \(C_\lambda=\{z: \modulus{z}=\exp(\tilde{\lambda})\}\).
\end{rem}
\begin{figure}[htbp]
  \centering
  \includegraphics[scale=.7]{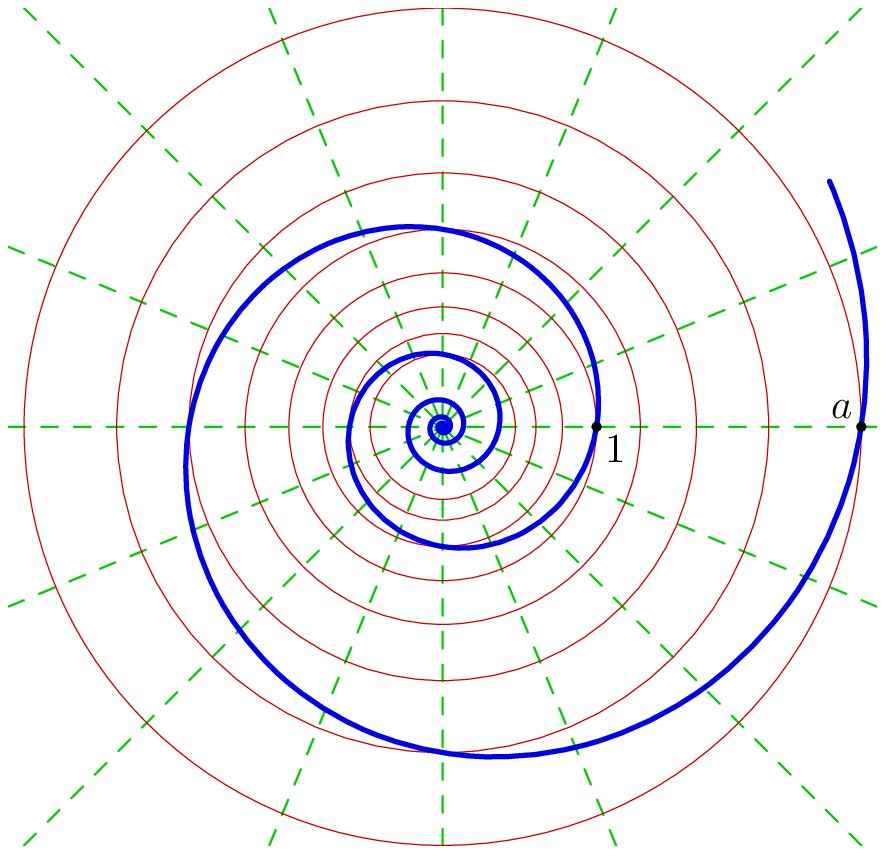}\hfill
   \includegraphics[scale=.7]{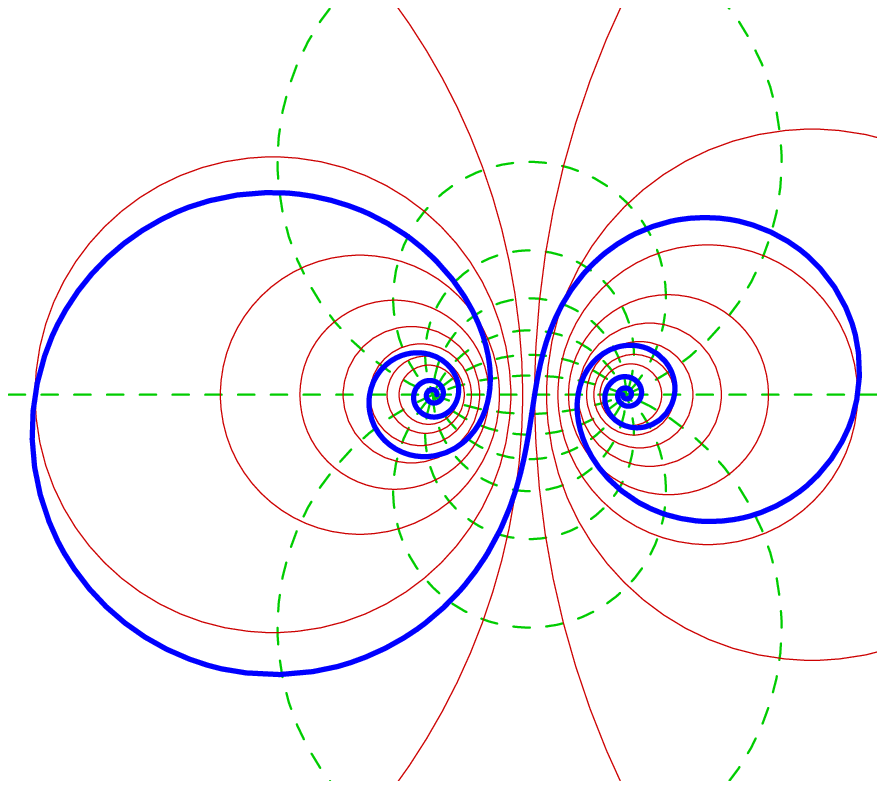}
  \caption[Loxodromes and pencils of cycles]{Logarithmic spirals
    (left) and loxodrome (right) with associated pencils of
    cycles. This is a combination of Figs.~\ref{fig:log-lox-init} and~\ref{fig:ellipt-hyperb-penc}.}
  \label{fig:log-spir-loxodr-pencils}
\end{figure}
In essence, the previous proposition says that a three-cycle and
\((\lambda,M)\) parametrisations are equivalent and delivers an
explicit procedure producing one from another. However, three-cycle
parametrisation is more geometric, since it links a loxodrome to a pair of
orthogonal pencils, see
Fig.~\ref{fig:log-spir-loxodr-pencils}. Furthermore, cycles \(C_1\),
\(C_2\), \(C_3\) (unlike parameters \(\lambda\) and \(M\)) can be
directly drawn on the plane to represent a loxodrome, which may be
even omitted.

\section{Applications of Three-Cycle Parametrisation}
\label{sec:applications}
Now we present some examples of the usage of three-cycle
parametrisation of loxodromes. First, we want to resolve
non-uniqueness in such parametrisations. Recall, that a triple
\(\{C_1,C_2,C_3\}\) is non-degenerate if \(C_2\neq C_3\) and \(C_3\)
is not zero-radius.
\begin{prop}
  Two non-degenerate triples \(\{C_1,C_2,C_3\}\) and
  \(\{C_1',C_2',C_3'\}\) parameterise the same loxodrome if and only
  if all the following conditions are satisfied:
  \begin{enumerate}
  \item \label{item:same-pencil}
    Pairs \(\{C_2,C_3\}\) and \(\{C_2',C_3'\}\)  span the same
    hyperbolic pencil. That is cycles \(C_2'\) and \(C_3'\) are linear
    combinations of \(C_2\) and \(C_3\) and vise versa.
  \item \label{item:same-lambda}
    Pairs \(\{C_2,C_3\}\) and \(\{C_2',C_3'\}\) define the same
    parameter \(\tilde{\lambda}\):
    \begin{equation}
      \label{eq:equal-lambdas}
      \nscalar {C_2}{C_3}=\nscalar {C_2'}{C_3'}.
    \end{equation}
  \item \label{item:ellipt-hyperb-ident}
    The elliptic-hyperbolic identity holds:
    \begin{equation}
      \label{eq:ellipt-hyperb-equat}
      \frac{\arccosh\nscalar {C_j}{C_j'}}{\arccosh\nscalar{C_2}{C_3}}
      \equiv
      \frac{1}{2\pi}\arccos\nscalar {C_1}{C_1'} \pmod{1}\,,
    \end{equation}
    where \(j\) is either \(2\) or \(3\).
  \end{enumerate}
\end{prop}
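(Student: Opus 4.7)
The strategy is to transport everything to the standard picture using the FLT $M$ associated to the unprimed triple by Procedure~\ref{proc:build-normalisation}; then $\{M(C_1),M(C_2),M(C_3)\}$ becomes the standard three-cycle parametrisation of the SLS with parameter $\tilde{\lambda}$---the real axis, the unit circle, and the circle of radius $\rme^{\tilde{\lambda}}$ centred at the origin. Because the cycle product, orthogonality and pencil structure are all FLT-covariant in the sense of~\eqref{eq:product-invariance}, the three conditions transform along with $M$, so it suffices to prove the equivalence in this standard position.

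For the forward direction, assume both triples parametrise the same loxodrome $\Lambda$. Then $\{M(C_j')\}$ is also a three-cycle parametrisation of the standard SLS, and the preceding proposition supplies $s\in\Space{R}{}$ with $M(C_j')=D_\lambda(s)M(C_j)$ for $j=1,2,3$, where $\lambda=\tilde{\lambda}+2\pi\rmi$. Since $D_\lambda(s)$ preserves both the hyperbolic pencil of concentric circles at the origin and the orthogonal elliptic pencil of lines through the origin, pulling back by $M^{-1}$ yields condition~(1). Condition~(2) is immediate from the FLT-invariance of $\nscalar{\cdot}{\cdot}$. For condition~(3), Lemma~\ref{le:norm-prod-geometr} evaluates all the relevant cycle products in closed form: $\nscalar{C_j}{C_j'}=\cosh(\tilde{\lambda}s)$ for $j=2,3$, together with $\nscalar{C_2}{C_3}=\cosh\tilde{\lambda}$ and $\nscalar{C_1}{C_1'}=\cos(2\pi s)$. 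Both sides of~\eqref{eq:ellipt-hyperb-equat} then represent $s$ modulo~$1$ up to sign.

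For the reverse direction, apply $M$ to the primed triple. Condition~(1) places $\{M(C_2'),M(C_3')\}$ in the hyperbolic pencil of origin-centred circles and, via the orthogonality already established in Prop.~\ref{pr:loxodrome-triple}, forces $M(C_1')$ into the orthogonal elliptic pencil of lines through the origin. Lemma~\ref{le:norm-prod-geometr}(ii) combined with condition~(2) then fixes the log-radii of $M(C_2')$ and $M(C_3')$ to differ by $\pm\tilde{\lambda}$. Writing $M(C_2')$ as the circle of log-radius $\rho$ and $M(C_1')$ as the line of inclination $\phi$, condition~(3) reduces to $\rho/\tilde{\lambda}\equiv\pm\phi/(2\pi)\pmod 1$, which determines a common real parameter $s$ with $\rho=\tilde{\lambda}s$ and $\phi=2\pi s$ modulo~$2\pi$. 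Thus $M(C_j')=D_\lambda(s)M(C_j)$ for $j=1,2,3$, so the primed triple parametrises the same standard SLS; applying $M^{-1}$ shows that both original triples parametrise the same loxodrome.

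The main obstacle is the branch ambiguity of $\arccos$ and $\arccosh$: their principal values lie in $[0,\pi]$ and $[0,\infty)$ respectively, whereas the natural parameter $s$ ranges freely over $\Space{R}{}$. The ``$\pmod 1$'' clause in~\eqref{eq:ellipt-hyperb-equat} absorbs the $2\pi$-periodicity of rotations, while the freedom to take $j\in\{2,3\}$ absorbs the sign flip induced by swapping the ``inner'' and ``outer'' concentric circles of the hyperbolic pencil. In addition, one must check that the two zero-radius cycles $C_0$ and $C_\infty$ play identical roles in Procedure~\ref{proc:build-normalisation} for both triples; this follows from condition~(1), which identifies the hyperbolic pencil and hence the unordered pair $\{C_0,C_\infty\}$, together with the common sign of $\tilde{\lambda}$ inherited from condition~(2).
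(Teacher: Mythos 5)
Your proposal is correct and follows essentially the same route as the paper: transport both triples to the standard position via the FLT \(M\) of Procedure~\ref{proc:build-normalisation}, use Lemma~\ref{le:norm-prod-geometr} to read off the normalised products of the concentric circles and of the lines through the origin, and interpret condition~\eqref{eq:ellipt-hyperb-equat} as saying precisely that the connecting diagonal map \(D_{x+\rmi y}(1)\) lies in the one-parameter subgroup \(D_{\tilde{\lambda}}(t)\) stabilising the SLS (you spell out the necessity direction via the preceding proposition, which the paper leaves as ``obvious''). The branch and sign ambiguities you flag are treated with the same looseness in the paper itself, so no essential gap separates the two arguments.
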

\begin{proof}
  Necessity of~\eqref{item:same-pencil} is obvious, since  hyperbolic
  pencils spanned by \(\{C_2,C_3\}\) and \(\{C_2',C_3'\}\) shall be
  both the image of concentric circles centred at origin under FLT
  \(M\) defining the loxodrome.  Necessity of~\eqref{item:same-lambda}
  is also obvious since \(\tilde{\lambda}\) is uniquely defined by the
  loxodrome. Necessity of~\eqref{item:ellipt-hyperb-ident} follows
  from the analysis of the following demonstration of sufficiency.

  For sufficiency, let \(M\) be FLT constructed through
  Procedure~\ref{proc:build-normalisation} from
  \(\{C_1,C_2,C_3\}\). Then~\eqref{item:same-pencil} implies that
  \(M(C_2')\) and \(M(C_3')\) are also circles centred at origin. Then
  Lem.~\ref{le:norm-prod-geometr} implies that the transformation
  \(D_{x+\rmi y}(1)\), where
  \(x=\arccosh\nscalar {C_2}{C_2'}\) and
  \(y=\arccos\nscalar{C_1}{C_1'}\) maps \(C_1'\) and \(C_2'\) to
  \(C_1\) and \(C_2\) respectively. Furthermore, from
  identity~\eqref{eq:equal-lambdas} follows that the same
  \(D_{x+\rmi y}(1)\) maps \(C_3'\) to \(C_3\). Finally,
  condition~\eqref{eq:ellipt-hyperb-equat} means that
  \(x+\rmi (y +2\pi n)= a(\tilde{\lambda} +2\pi \rmi)\) for 
  \(a=x/\tilde{\lambda}\) and some
  \(n\in\Space{Z}{}\). In other words
  \(D_{x+\rmi y}(1)=D_{\tilde{\lambda}}(a)\), thus \(D_{x+\rmi y}(1)\)
  maps SLS with the parameter \(\tilde{\lambda}\) to
  itself. Since \(\{M(C_1), M(C_2), M(C_3)\}\) and \(\{M(C_1'),
  M(C_2'), M(C_3')\}\)
  are two three-cycle parametrisations of the same SLS, \(\{C_1,C_2,C_3\}\) and
  \(\{C_1',C_2',C_3'\}\) are two three-cycle parametrisations of the
  same loxodrome.
\end{proof}
See Fig.~\ref{fig:equiv-param-loxodr} for an animated family of
equivalent three-cycle parametrisations of the same loxodrome (also
posted at~\cite{Kisil16a}).
Relation~\eqref{eq:ellipt-hyperb-equat}, which correlates elliptic and
hyperbolic rotations for loxodrome, regularly appears in this
context. The next topic provides another illustration of this.

\begin{proc}
  \label{proc:loxo-pass-point}
  To verify whether a loxodrome parametrised by three cycles
  \(\{C_1,C_2,C_3\}\) passes a point parametrised by a zero-radius
  cycle \(C_0\) we perform the following steps:
  \begin{enumerate}
  \item Define the cycle
    \begin{equation}
      \label{eq:Ch-cycle-for-point}
      C_h=tC_2+(1-t)C_3\,, \qquad
      \text{where } t=-\frac{\scalar{C_0}{C_3}}{\scalar{C_0}{C_2-C_3}}\,,
    \end{equation}
    which belongs to the hyperbolic
    pencil spanned by \(\{C_2,C_3\}\) and is orthogonal to
    \(C_0\), that is, passes the respective point.
  \item Find cycle \(C_e\) from the elliptic pencil orthogonal to
    \(\{C_2,C_3\}\) which passes \(C_0\). \(C_e\) is the solution of the
    system of three linear (with respect to parameters of \(C_e\))
    equations, cf. Ex~\ref{ex:cycle-product-geometric}:
    \begin{align*}
      \scalar{C_e}{C_0}&=0\,,\\
      \scalar{C_e}{C_2}&=0\,,\\
      \scalar{C_e}{C_3}&=0\,.
    \end{align*}
  \item  Verify the elliptic-hyperbolic relation:
    \begin{equation}
      \label{eq:ellipt-hyperb-equat-point}
      \frac{\arccosh\nscalar {C_h}{C_2}}
      {\arccosh\nscalar {C_2}{C_3}} \equiv
      \frac{1}{2\pi}\arccos\nscalar {C_e}{C_1} \pmod{1}\,.
    \end{equation}
  \end{enumerate}
\end{proc}
\begin{proof}
  Let \(M\) be the standard FLT associated  to   \(\{C_1,C_2,C_3\}\)
  from Procedure~\ref{proc:build-normalisation}.
  The point \(C_0\) belongs to the loxodrome if the transformation
  \(D_{\tilde{\lambda}}(t)\) for some \(t\) moves \(M(C_0)\) to the
  intersection \(M(C_1)\cap M(C_2)\). But \(D_{x+\rmi y}(1)\) with
  \(x= \arccosh\nscalar {C_h}{C_2}\) and
  \(y=\arccos\nscalar {C_e}{C_1}\) maps \(M(C_h)\rightarrow M(C_2)\) and
  \(M(C_e)\rightarrow M(C_1)\), thus it also maps \(M(C_0)\subset M(C_h)\cap M(C_e)\)
  to \(M(C_1)\cap M(C_2)\). Condition~\eqref{eq:ellipt-hyperb-equat-point}
  guaranties that
  \(D_{x+\rmi y}(1)=D_{\tilde{\lambda}}(x/\tilde{\lambda})\), as in
  the previous Prop.
\end{proof}
Our final example considers two loxodromes which may have completely
different associated pencils.
\begin{proc}
  \label{proc:intersection-angle}
  Let two loxodromes are parametrised by \(\{C_1,C_2,C_3\}\) and
  \(\{C_1',C_2',C_3'\}\). Assume they intersect at some point
  parametrised by a zero-radius cycle \(C_0\) (this can be checked by
  Procedure~\ref{proc:loxo-pass-point}, if needed). To find the angle of
  intersection we perform the following steps:
  \begin{enumerate}
  \item Use~\eqref{eq:Ch-cycle-for-point} to find cycles \(C_h\) and
    \(C_h'\) belonging to hyperbolic pencils, spanned by
    \(\{C_2,C_3\}\) and  \(\{C_2',C_3'\}\) respectively, and both passing
    \(C_0\).
  \item The intersection angle is
    \begin{equation}
      \label{eq:angle-between-loxodromes}
      \arccos\nscalar{C_h}{C_h'}-\arctan\left(\frac{\tilde{\lambda}}{2\pi}\right)
      +\arctan\left(\frac{\tilde{\lambda}'}{2\pi}\right)\,,
    \end{equation}
    where \(\tilde{\lambda}\) and  \(\tilde{\lambda}'\) are determined
    by~\eqref{eq:lambda-from-triple}. 
  \end{enumerate}
\end{proc}
\begin{proof}
  A loxodrome intersects any cycle from its hyperbolic pencil with the
  fixed angle \(\arctan(\tilde{\lambda}/(2\pi))\). This is used to
  amend the intersection angle \(\arccos\nscalar{C_h}{C_h'}\)
  of cycles from the respective hyperbolic pencils.
\end{proof}
\begin{cor}
  \label{cor:tangent-lox-circle}
  Let a loxodrome parametrised by \(\{C_1,C_2,C_3\}\) passes a point
  parametrised by a zero-radius cycle \(C_0\) as in
  Procedure~\ref{proc:loxo-pass-point}.  A non-zero radius cycle  \(C\) is
  tangent to the loxodrome at \(C_0\) if and only if two conditions
  holds:
  \begin{equation}
    \label{eq:tangent-lox-cycle}
    \begin{split}
      \scalar{C}{C_0}&=0\,,\\
      \arccos\nscalar{C}{C_h}&=\arctan\left(\frac{\tilde{\lambda}}{2\pi}\right)\,,
    \end{split}
  \end{equation}
  where \(C_h\) is given by~\eqref{eq:Ch-cycle-for-point} and is
  \(\tilde{\lambda}\) is determined by~\eqref{eq:lambda-from-triple}.
\end{cor}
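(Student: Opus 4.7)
The plan is to separate the tangency condition into incidence at $C_0$ and an angle match, and to verify each by reducing to the standard loxodrome via the FLT $M$ from Procedure~\ref{proc:build-normalisation}. The first relation $\scalar{C}{C_0}=0$ is exactly Example~\ref{ex:cycle-product-geometric}(3) applied to the non-zero radius cycle $C$ and the zero-radius cycle $C_0$, expressing the incidence of $C$ with the point $C_0$; tangency of $C$ to the loxodrome at $C_0$ manifestly requires this, so the first equation in~\eqref{eq:tangent-lox-cycle} is necessary, and under its assumption one only has to analyse the second.

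For the second equation, I would apply the standard FLT $M$ from Procedure~\ref{proc:build-normalisation}, which sends the loxodrome to SLS $z(t)=\pm\rme^{\lambda t}$ with $\lambda=\tilde{\lambda}+2\pi\rmi$, sends $C_0$ to a point on SLS, and sends $C_h$ to the concentric circle through $M(C_0)$ in the hyperbolic pencil of $\{M(C_2),M(C_3)\}$. Differentiating $z(t)$ gives the tangent direction $\lambda\cdot M(C_0)$ at $M(C_0)$, so the tangent line to SLS meets that concentric circle under the fixed angle $\pi/2-\arg\lambda=\arctan(\tilde{\lambda}/(2\pi))$. Consequently $M(C)$ is tangent to SLS at $M(C_0)$ if and only if $M(C)$ meets $M(C_h)$ at $M(C_0)$ under this same angle. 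Since FLT is conformal and the normalised cycle product~\eqref{eq:norm-cycle-prod} is FLT-invariant, this intersection angle can be read off as $\arccos\nscalar{C}{C_h}$ in the original picture, yielding the second equation of~\eqref{eq:tangent-lox-cycle}.

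An equivalent viewpoint is that the corollary is the limiting case of Procedure~\ref{proc:intersection-angle} in which the second loxodrome degenerates to the circle $C$ itself---so $\tilde{\lambda}'=0$ and the hyperbolic pencil cycle through $C_0$ is $C_h'=C$---and the intersection angle is set to zero: substitution into~\eqref{eq:angle-between-loxodromes} then reproduces precisely~\eqref{eq:tangent-lox-cycle}. The main obstacle I anticipate is legitimising the identification of $\arccos\nscalar{C_1}{C_2}$ with the actual intersection angle for generic cycles not in the standard forms of Lemma~\ref{le:norm-prod-geometr}; the FLT-reduction above bypasses it by carrying out the tangent-angle computation in the standard setting for SLS and transporting the identity back through the conformal map $M^{-1}$.
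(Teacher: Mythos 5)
Your proposal is correct and is essentially the paper's own argument: the paper proves the corollary exactly via your ``equivalent viewpoint,'' regarding \(C\) as the degenerate loxodrome \(\{C_e,C,C\}\), substituting \(C_h'=C\), \(\tilde{\lambda}'=0\) into \eqref{eq:angle-between-loxodromes} from Procedure~\ref{proc:intersection-angle} and equating the angle to zero, while your FLT-reduction to SLS merely re-derives the constant-angle property that underlies that Procedure. Note only that your worry about identifying \(\arccos\nscalar{C}{C_h}\) with the intersection angle for generic cycles is not really ``bypassed'' by applying \(M\) (since \(M(C)\) is still a generic circle), but the paper relies on the same identification in Procedure~\ref{proc:intersection-angle}, so you are at the paper's level of rigour.
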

\begin{proof}
  The first condition simply verifies that \(C\) passes \(C_0\),
  cf. Ex~\ref{ex:cycle-product-geometric}.  Cycle \(C\), as a
  degenerated loxodrome, is parametrised by \(\{C_e,C,C\}\), where
  \(C_e\) is any cycle orthogonal to \(C\) and \(C_e\) is not relevant
  in the following. The hyperbolic pencil spanned by two copies of
  \(C\) consists of \(C\) only. Thus we put \(C_h'=C\),
  \(\tilde{\lambda}'=0\) in~\eqref{eq:angle-between-loxodromes} and
  equate it to \(0\) to obtain the second identity
  in~\eqref{eq:tangent-lox-cycle}.
\end{proof}

\section{Discussion and Open Questions}
\label{sec:open-questions}

It was mentioned at the end of Section~\ref{sec:loxodr-encod-as} that
a three-cycle parametrisation of loxodromes is more geometrical than
their presentation by a pair \((\lambda,M)\). Furthermore, three-cycle
parametrisation reveals the natural analogy between elliptic and
hyperbolic features of loxodromes, see~\eqref{eq:ellipt-hyperb-equat}
as an illustration.  Examples in Section~\ref{sec:applications} show
that various geometrical questions are explicitly answered in term of
three-cycle parametrisation. Thus, our work extends the set of objects
in Lie sphere geometry---circle, lines and points---to the natural
maximal conformally-invariant family, which also includes
loxodromes. In practical terms, three-cycle parametrisation allows to
extend the library \texttt{figure} for M\"obuis invariant
geometry~\cite{Kisil14b} to operate with loxodromes as well.

It is even more important, that the presented technique is another
implementation of a general framework~\cites{Kisil15a,Kisil14b}, which
provides a significant advance in Lie sphere geometry. The Poincar\'e
extension of FLT from the real line to the upper half-plane was
performed by a pair of orthogonal cycles in~\cite{Kisil15a}. A similar
extension of FLT from the complex plane to the upper half-space can be
done by a \emph{triple of pairwise orthogonal cycles}. Thus, triples
satisfying FLT-invariant properties \eqref{it:orthogonal} and
\eqref{it:disjoint} of Prop.~\ref{pr:loxodrome-triple} present another
non-equivalent class of cycle ensembles in the sense
of~\cite{Kisil15a}. In general, Lie sphere geometry \emph{can be
  enriched by consideration of cycle ensembles} interrelated by a list
of FLT-invariant properties~\cites{Kisil15a}. Such ensembles become new
objects in the extended Lie spheres geometry and can be represented by
points in a \emph{cycle ensemble space}.

There are several natural directions to extend this work further, here are
just few of them:
\begin{enumerate}
\item Link our ``global'' parametrisation of loxodromes with
  differential geometry approach
  from~\cites{Bolt07a,Porter93a,Porter07a}. Our last
  Cor.~\ref{cor:tangent-lox-circle} can be a first step in this
  direction.
\item Consider all FLT-invariant non-equivalent classes of three-cycle
  ensembles on \(\Space{C}{}\): pairwise orthogonal triples
  (representing points in the upper half-space~\cite{Kisil15a}),
  triples satisfying properties \eqref{it:orthogonal} and
  \eqref{it:disjoint} of Prop.~\ref{pr:loxodrome-triple} (representing
  loxodromes), etc.
\item Extend this consideration for
  quaternions
  or Clifford algebras~\cites{GuerlebeckHabetaSproessig08,MoraisGeorgievSprossig14a}.
  The previous works~\cites{Porter98a,Porter98b} and availability of
  FSCc in this setup~\citelist{\cite{Cnops02a}*{Ch.~4}
    \cite{FillmoreSpringer90a}} make it rather promising. 
\item Consider M\"obius transformations in rings of dual and double
  numbers~\cites{Kisil12a,Kisil05a,Kisil15a,Kisil07a,Kisil09e,Kisil11a,%
    Mustafa17a,BarrettBolt10a}. There are enough indications that the
  story will not be quite the same as for complex numbers.
\item Explore further connections of loxodromes with
  \begin{itemize}
  \item   Carleson curves and
  microlocal properties of singular integral
  operators~\cites{BishopBoettcherKarlovichSpitkovsky99a,%
    BoettcherKarlovich01a,BarrettBolt08a}; or
  \item applications in operator
  theory~\cites{Simon11a,Vasilevski08a}.
  \end{itemize}
\end{enumerate}
Some combinations of those topics shall be fruitful as well.

\section*{Acknowledgments}
\label{sec:acknowledgments}
The second-named author was supported by the Summer UG Bursary Scheme
(2017) at the University of Leeds.  An earlier work~\cite{Hurst13a} on
this topic was supported by the same scheme in 2013.

Graphics for this paper was prepared using \texttt{Asymptote}
software~\cite{Asymptote}. We supply an \texttt{Asymptote} library
\texttt{cycle2D} for geometry of planar cycles under GPL
licence~\cite{GNUGPL} with the copy of this paper at
\href{http://arxiv.org/a/kisil_v_1}{arXiv.org}.

\label{se:appendix}

\begin{figure}[htbp]
  \centering
    \animategraphics[controls=true,width=.9\textwidth,poster=first]{50}{_loxodromes}{1}{200}
    \caption[Equivalent parametrisation of a loxodrome]{Animated
      graphics of equivalent three-cycle parametrisations of a
      loxodrome. The green cycle is \(C_1\), two red circles are
      \(C_2\) and \(C_3\).}
  \label{fig:equiv-param-loxodr}
\end{figure}

\small
\providecommand{\noopsort}[1]{} \providecommand{\printfirst}[2]{#1}
  \providecommand{\singleletter}[1]{#1} \providecommand{\switchargs}[2]{#2#1}
  \providecommand{\irm}{\textup{I}} \providecommand{\iirm}{\textup{II}}
  \providecommand{\vrm}{\textup{V}} \providecommand{\cprime}{'}
  \providecommand{\eprint}[2]{\texttt{#2}}
  \providecommand{\myeprint}[2]{\texttt{#2}}
  \providecommand{\arXiv}[1]{\myeprint{http://arXiv.org/abs/#1}{arXiv:#1}}
  \providecommand{\doi}[1]{\href{http://dx.doi.org/#1}{doi:
  #1}}\providecommand{\CPP}{\texttt{C++}}
  \providecommand{\NoWEB}{\texttt{noweb}}
  \providecommand{\MetaPost}{\texttt{Meta}\-\texttt{Post}}
  \providecommand{\GiNaC}{\textsf{GiNaC}}
  \providecommand{\pyGiNaC}{\textsf{pyGiNaC}}
  \providecommand{\Asymptote}{\texttt{Asymptote}}
\begin{bibdiv}
\begin{biblist}

\bib{BarrettBolt08a}{article}{
      author={Barrett, David~E.},
      author={Bolt, Michael},
       title={Cauchy integrals and {M}\"obius geometry of curves},
        date={2007},
        ISSN={1093-6106},
     journal={Asian J. Math.},
      volume={11},
      number={1},
       pages={47\ndash 53},
         url={https://doi.org/10.4310/AJM.2007.v11.n1.a6},
      review={\MR{2304580}},
}

\bib{BarrettBolt10a}{article}{
      author={Barrett, David~E.},
      author={Bolt, Michael},
       title={Laguerre arc length from distance functions},
        date={2010},
        ISSN={1093-6106},
     journal={Asian J. Math.},
      volume={14},
      number={2},
       pages={213\ndash 233},
         url={https://doi.org/10.4310/AJM.2010.v14.n2.a3},
      review={\MR{2746121}},
}

\bib{Beardon95}{book}{
      author={Beardon, Alan~F.},
       title={The geometry of discrete groups},
      series={Graduate Texts in Mathematics},
   publisher={Springer-Verlag},
     address={New York},
        date={1995},
      volume={91},
        ISBN={0-387-90788-2},
        note={Corrected reprint of the 1983 original},
      review={\MR{MR1393195 (97d:22011)}},
}

\bib{Benz07a}{book}{
      author={Benz, Walter},
       title={Classical geometries in modern contexts. {Geometry} of real inner
  product spaces},
     edition={Second edition},
   publisher={Birkh\"auser Verlag},
     address={Basel},
        date={2007},
        ISBN={978-3-7643-8540-8},
         url={http://dx.doi.org/10.1007/978-3-0348-0420-2},
      review={\MR{MR2370626 (2008i:51001)}},
}

\bib{BishopBoettcherKarlovichSpitkovsky99a}{article}{
      author={Bishop, C.~J.},
      author={B\"ottcher, A.},
      author={Karlovich, Yu.~I.},
      author={Spitkovsky, I.},
       title={Local spectra and index of singular integral operators with
  piecewise continuous coefficients on composed curves},
        date={1999},
        ISSN={0025-584X},
     journal={Math. Nachr.},
      volume={206},
       pages={5\ndash 83},
         url={https://doi.org/10.1002/mana.19992060102},
      review={\MR{1711562}},
}

\bib{Bolt07a}{article}{
      author={Bolt, Michael},
       title={Extremal properties of logarithmic spirals},
        date={2007},
        ISSN={0138-4821},
     journal={Beitr\"age Algebra Geom.},
      volume={48},
      number={2},
       pages={493\ndash 520},
      review={\MR{2364804}},
}

\bib{BoettcherKarlovich01a}{incollection}{
      author={B\"ottcher, A.},
      author={Karlovich, Yu.~I.},
       title={Cauchy's singular integral operator and its beautiful spectrum},
        date={2001},
   booktitle={Systems, approximation, singular integral operators, and related
  topics ({B}ordeaux, 2000)},
      series={Oper. Theory Adv. Appl.},
      volume={129},
   publisher={Birkh\"auser, Basel},
       pages={109\ndash 142},
      review={\MR{1882693}},
}

\bib{Cnops02a}{book}{
      author={Cnops, Jan},
       title={An introduction to {D}irac operators on manifolds},
      series={Progress in Mathematical Physics},
   publisher={Birkh\"auser Boston Inc.},
     address={Boston, MA},
        date={2002},
      volume={24},
        ISBN={0-8176-4298-6},
      review={\MR{1 917 405}},
}

\bib{FillmoreSpringer90a}{article}{
      author={Fillmore, Jay~P.},
      author={Springer, A.},
       title={M\"obius groups over general fields using {C}lifford algebras
  associated with spheres},
        date={1990},
        ISSN={0020-7748},
     journal={Internat. J. Theoret. Phys.},
      volume={29},
      number={3},
       pages={225\ndash 246},
         url={http://dx.doi.org/10.1007/BF00673627},
      review={\MR{1049005 (92a:22016)}},
}

\bib{FillmoreSpringer00a}{article}{
      author={Fillmore, Jay~P.},
      author={Springer, Arthur},
       title={Determining circles and spheres satisfying conditions which
  generalize tangency},
        date={2000},
        note={preprint,
  \url{http://www.math.ucsd.edu/~fillmore/papers/2000LGalgorithm.pdf}},
}

\bib{GNUGPL}{manual}{
      author={GNU},
       title={{G}eneral {P}ublic {L}icense ({GPL})},
     edition={version 3},
organization={Free Software Foundation, Inc.},
     address={Boston, USA},
        date={2007},
        note={URL: \url{http://www.gnu.org/licenses/gpl.html}},
}

\bib{GohbergLancasterRodman05a}{book}{
      author={Gohberg, Israel},
      author={Lancaster, Peter},
      author={Rodman, Leiba},
       title={Indefinite linear algebra and applications},
   publisher={Birkh\"auser Verlag},
     address={Basel},
        date={2005},
        ISBN={978-3-7643-7349-8; 3-7643-7349-0},
      review={\MR{2186302 (2006j:15001)}},
}

\bib{GuerlebeckHabetaSproessig08}{book}{
      author={G{\"u}rlebeck, Klaus},
      author={Habetha, Klaus},
      author={Spr{\"o}{\ss}ig, Wolfgang},
       title={Holomorphic functions in the plane and {$n$}-dimensional space},
   publisher={Birkh\"auser Verlag},
     address={Basel},
        date={2008},
        ISBN={978-3-7643-8271-1},
        note={Translated from the 2006 German original, With 1 CD-ROM (Windows
  and UNIX)},
      review={\MR{MR2369875}},
}

\bib{Asymptote}{misc}{
      author={Hammerlindl, Andy},
      author={Bowman, John},
      author={Prince, Tom},
       title={Asymptote---powerful descriptive vector graphics language for
  technical drawing, inspired by {\MetaPost}},
        date={2004--2011},
        note={URL: \url{http://asymptote.sourceforge.net/}},
}

\bib{HerranzSantander02b}{article}{
      author={Herranz, Francisco~J.},
      author={Santander, Mariano},
       title={Conformal compactification of spacetimes},
        date={2002},
        ISSN={0305-4470},
     journal={J. Phys. A},
      volume={35},
      number={31},
       pages={6619\ndash 6629},
        note={\arXiv{math-ph/0110019}},
      review={\MR{MR1928852 (2004b:53123)}},
}

\bib{Hurst13a}{techreport}{
      author={Hurst, Timothy~D.},
       title={Circles, spirals and {M}\"obius transformations},
        type={Summer project report},
 institution={University of Leeds},
        date={2013},
}

\bib{Kirillov06}{book}{
      author={Kirillov, A.~A.},
       title={A tale of two fractals},
   publisher={Springer, New York},
        date={2013},
        ISBN={978-0-8176-8381-8; 978-0-8176-8382-5},
         url={http://dx.doi.org/10.1007/978-0-8176-8382-5},
        note={Draft:
  \url{http://www.math.upenn.edu/~kirillov/MATH480-F07/tf.pdf}},
      review={\MR{3060066}},
}

\bib{Kisil06a}{article}{
      author={Kisil, Vladimir~V.},
       title={Erlangen program at large--0: Starting with the group {${\rm
  SL}\sb 2({\bf R})$}},
        date={2007},
        ISSN={0002-9920},
     journal={Notices Amer. Math. Soc.},
      volume={54},
      number={11},
       pages={1458\ndash 1465},
        note={\arXiv{math/0607387},
  \href{http://www.ams.org/notices/200711/tx071101458p.pdf}{On-line}.
  \Zbl{1137.22006}},
      review={\MR{MR2361159}},
}

\bib{Kisil05b}{article}{
      author={Kisil, Vladimir~V.},
       title={{F}illmore-{S}pringer-{C}nops construction implemented in
  \texttt{GiNaC}},
        date={2007},
        ISSN={0188-7009},
     journal={Adv. Appl. Clifford Algebr.},
      volume={17},
      number={1},
       pages={59\ndash 70},
        note={\href{http://dx.doi.org/10.1007/s00006-006-0017-4}{On-line}. A
  more recent version: \arXiv{cs.MS/0512073}. The latest documentation, source
  files, and live ISO image are at the project page:
  \url{http://moebinv.sourceforge.net/}. \Zbl{05134765}},
      review={\MR{MR2303056}},
}

\bib{Kisil06b}{article}{
      author={Kisil, Vladimir~V.},
       title={Two-dimensional conformal models of space-time and their
  compactification},
        date={2007},
        ISSN={0022-2488},
     journal={J. Math. Phys.},
      volume={48},
      number={7},
       pages={\href{http://link.aip.org/link/?JMP/48/073506}{073506}, 8},
        note={\arXiv{math-ph/0611053}. \Zbl{1144.81368}},
      review={\MR{MR2337687}},
}

\bib{Kisil07a}{article}{
      author={Kisil, Vladimir~V.},
       title={Erlangen program at large---2: {Inventing} a wheel. {The}
  parabolic one},
        date={2010},
     journal={Zb. Pr. Inst. Mat. NAN Ukr. (Proc. Math. Inst. Ukr. Ac. Sci.)},
      volume={7},
      number={2},
       pages={89\ndash 98},
        note={\arXiv{0707.4024}},
}

\bib{Kisil05a}{article}{
      author={Kisil, Vladimir~V.},
       title={Erlangen program at large--1: Geometry of invariants},
        date={2010},
     journal={SIGMA, Symmetry Integrability Geom. Methods Appl.},
      volume={6},
      number={076},
       pages={45},
        note={\arXiv{math.CV/0512416}. \MR{2011i:30044}. \Zbl{1218.30136}},
}

\bib{Kisil11a}{article}{
      author={Kisil, Vladimir~V.},
       title={{E}rlangen {P}rogramme at {L}arge 3.2: {L}adder operators in
  hypercomplex mechanics},
        date={2011},
     journal={Acta Polytechnica},
      volume={51},
      number={4},
       pages={\href{http://ctn.cvut.cz/ap/download.php?id=614}{44\ndash 53}},
        note={\arXiv{1103.1120}},
}

\bib{Kisil12a}{book}{
      author={Kisil, Vladimir~V.},
       title={Geometry of {M}\"obius transformations: {E}lliptic, parabolic and
  hyperbolic actions of {$\mathrm{SL}_2(\mathbf{R})$}},
   publisher={Imperial College Press},
     address={London},
        date={2012},
        note={Includes a live DVD. \Zbl{1254.30001}},
}

\bib{Kisil09e}{article}{
      author={Kisil, Vladimir~V.},
       title={Induced representations and hypercomplex numbers},
        date={2013},
     journal={Adv. Appl. Clifford Algebras},
      volume={23},
      number={2},
       pages={417\ndash 440},
         url={http://dx.doi.org/10.1007/s00006-012-0373-1},
        note={\arXiv{0909.4464}. \Zbl{1269.30052}},
}

\bib{Kisil14b}{article}{
      author={Kisil, Vladimir~V.},
       title={Ensembles of cycles programmed in \texttt{GiNaC}},
        date={2014--2018},
        note={\arXiv{1512.02960}. Project page:
  \url{http://moebinv.sourceforge.net/}},
}

\bib{Kisil16a}{misc}{
      author={Kisil, Vladimir~V.},
       title={{MoebInv} illustrations},
        date={2015--18},
         url={\url{https://goo.gl/Z9GUL0}},
        note={\href{https://goo.gl/Z9GUL0}{YouTube playlist}},
}

\bib{Kisil14a}{article}{
      author={Kisil, Vladimir~V.},
       title={Remark on continued fractions, {M\"obius} transformations and
  cycles},
        date={2016},
     journal={{\cyr Izvestiya Komi nauchnogo centra UrO RAN} [Izvestiya Komi
  nauchnogo centra UrO RAN]},
      volume={25},
      number={1},
       pages={11\ndash 17},
         url={http://www.izvestia.komisc.ru/Archive/i25_ann.files/kisil.pdf},
        note={\arXiv{1412.1457},
  \href{http://www.izvestia.komisc.ru/Archive/i25_ann.files/kisil.pdf}{on-line}},
}

\bib{Kisil15a}{article}{
      author={Kisil, Vladimir~V.},
       title={{P}oincar\'e extension of {M}\"obius transformations},
        date={2017},
     journal={Complex Variables and Elliptic Equations},
      volume={62},
      number={9},
       pages={1221\ndash 1236},
      eprint={http://dx.doi.org/10.1080/17476933.2016.1250399},
         url={http://dx.doi.org/10.1080/17476933.2016.1250399},
        note={\arXiv{1507.02257}},
}

\bib{MoraisGeorgievSprossig14a}{book}{
      author={Morais, Jo\~ao~Pedro},
      author={Georgiev, Svetlin},
      author={Spr\"o\ss~ig, Wolfgang},
       title={Real quaternionic calculus handbook},
   publisher={Birkh\"auser/Springer, Basel},
        date={2014},
        ISBN={978-3-0348-0621-3; 978-3-0348-0622-0},
         url={https://doi.org/10.1007/978-3-0348-0622-0},
      review={\MR{3135396}},
}

\bib{Mustafa17a}{article}{
      author={{Mustafa}, Khawlah~A.},
       title={The groups of two by two matrices in double and dual numbers and
  associated {M}\"obius transformations},
        date={2017-07},
     journal={ArXiv e-prints: 1707.01349},
      eprint={1707.01349},
        note={\arXiv{1707.01349}},
}

\bib{Pedoe95a}{book}{
      author={Pedoe, Dan},
       title={Circles: A mathematical view},
      series={MAA Spectrum},
   publisher={Mathematical Association of America, Washington, DC},
        date={1995},
        ISBN={0-88385-518-6},
        note={Revised reprint of the 1979 edition, With a biographical appendix
  on Karl Feuerbach by Laura Guggenbuhl},
      review={\MR{1349339 (96e:51020)}},
}

\bib{Porter93a}{article}{
      author={Porter, R.~Michael},
       title={Differential invariants in {M}\"obius geometry},
        date={1993},
        ISSN={0963-2654},
     journal={J. Natur. Geom.},
      volume={3},
      number={2},
       pages={97\ndash 123},
      review={\MR{1205080}},
}

\bib{Porter98a}{article}{
      author={Porter, R.~Michael},
       title={M\"obius invariant quaternion geometry},
        date={1998},
        ISSN={1088-4173},
     journal={Conform. Geom. Dyn.},
      volume={2},
       pages={89\ndash 106},
         url={https://doi.org/10.1090/S1088-4173-98-00032-0},
      review={\MR{1649091}},
}

\bib{Porter98b}{article}{
      author={Porter, R.~Michael},
       title={Quaternionic {M}\"obius transformations and loxodromes},
        date={1998},
        ISSN={0278-1077},
     journal={Complex Variables Theory Appl.},
      volume={36},
      number={3},
       pages={285\ndash 300},
         url={https://doi.org/10.1080/17476939808815114},
      review={\MR{1671478}},
}

\bib{Porter07a}{article}{
      author={Porter, R.~Michael},
       title={Local geometry of circles and loxodromes},
        date={2007},
        ISSN={0971-3611},
     journal={J. Anal.},
      volume={15},
       pages={211\ndash 219},
      review={\MR{2554098}},
}

\bib{Schwerdtfeger79a}{book}{
      author={Schwerdtfeger, Hans},
       title={Geometry of complex numbers: Circle geometry, {Moebius}
  transformation, non-{Euclidean} geometry},
      series={Dover Books on Advanced Mathematics},
   publisher={Dover Publications Inc.},
     address={New York},
        date={1979},
        ISBN={0-486-63830-8},
        note={A corrected reprinting of the 1962 edition},
      review={\MR{620163 (82g:51032)}},
}

\bib{Simon11a}{book}{
      author={Simon, Barry},
       title={{Szeg\H o's} theorem and its descendants. {S}pectral theory for
  {$L^2$} perturbations of orthogonal polynomials},
      series={M. B. Porter Lectures},
   publisher={Princeton University Press, Princeton, NJ},
        date={2011},
        ISBN={978-0-691-14704-8},
      review={\MR{2743058}},
}

\bib{Vasilevski08a}{book}{
      author={Vasilevski, Nikolai~L.},
       title={Commutative algebras of {T}oeplitz operators on the {B}ergman
  space},
      series={Operator Theory: Advances and Applications},
   publisher={Birkh\"auser Verlag},
     address={Basel},
        date={2008},
      volume={185},
        ISBN={978-3-7643-8725-9},
      review={\MR{2441227 (2009m:47071)}},
}

\end{biblist}
\end{bibdiv}

\end{document}